% SIAM Article Template
\documentclass[onefignum,onetabnum]{siamonline190516}

% Information that is shared between the article and the supplement
% (title and author information, macros, packages, etc.) goes into
% ex_shared.tex. If there is no supplement, this file can be included
% directly.

% SIAM Shared Information Template
% This is information that is shared between the main document and any
% supplement. If no supplement is required, then this information can
% be included directly in the main document.

% Packages and macros go here
\usepackage{lipsum}
\usepackage{amsfonts}
\usepackage{graphicx}
\usepackage{epstopdf}
\usepackage{algorithmic}
\ifpdf
  \DeclareGraphicsExtensions{.eps,.pdf,.png,.jpg}
\else
  \DeclareGraphicsExtensions{.eps}
\fi

\usepackage{verbatim}
\usepackage[format=plain, labelfont=bf, singlelinecheck=off]{caption} 
\usepackage{booktabs}
\usepackage{array}
\usepackage{multirow}

\newcommand{\bfb}{\mathbf{b}}

\newcommand{\bfh}{\mathbf{h}}

\newcommand{\bfk}{\mathbf{k}}
\newcommand{\bff}{\mathbf{f}}

\newcommand{\bfr}{\mathbf{r}}
\newcommand{\bfs}{\mathbf{s}}
\newcommand{\bfu}{\mathbf{u}}
\newcommand{\bfv}{\mathbf{v}}

\newcommand{\bfx}{\mathbf{x}}
\newcommand{\bfy}{\mathbf{y}}
\newcommand{\bfz}{\mathbf{z}}

\newcommand{\bfA}{\mathbf{A}}
\newcommand{\bfB}{\mathbf{B}}

\newcommand{\bfD}{\mathbf{D}}

\newcommand{\bfH}{\mathbf{H}}

\newcommand{\bfP}{\mathbf{P}}
\newcommand{\bfQ}{\mathbf{Q}}
\newcommand{\bfR}{\mathbf{R}}

\newcommand{\bfU}{\mathbf{U}}
\newcommand{\bfV}{\mathbf{V}}
\newcommand{\bfW}{\mathbf{W}}
\newcommand{\bfX}{\mathbf{X}}

\newcommand{\bfbeta}{\boldsymbol \beta}

\newcommand{\bftheta}{\boldsymbol \theta}
\newcommand{\bfvartheta}{\boldsymbol \vartheta}

\newcommand{\bfphi}{\boldsymbol \phi}
\newcommand{\bfzeta}{\boldsymbol \zeta}
\newcommand{\bfgamma}{\boldsymbol \gamma}
\newcommand{\bfsigma}{\boldsymbol \sigma}

\newcommand{\calX}{{\cal X}}

% Prevent itemized lists from running into the left margin inside theorems and proofs
\usepackage{enumitem}
\setlist[enumerate]{leftmargin=.5in}
\setlist[itemize]{leftmargin=.5in}

% Add a serial/Oxford comma by default.

\usepackage[english]{babel}
% Used for creating new theorem and remark environments
\newsiamremark{remark}{Remark}
\newsiamremark{assumption}{Assumption}

\newsiamremark{hypothesis}{Hypothesis}
\crefname{hypothesis}{Hypothesis}{Hypotheses}
\newsiamthm{claim}{Claim}

% Sets running headers as well as PDF title and authors
\headers{Objective Bayesian Analysis of a Cokriging Model}{Pulong Ma}

% Title. If the supplement option is on, then "Supplementary Material"
% is automatically inserted before the title.
\title{Objective Bayesian Analysis of a Cokriging Model for Hierarchical Multifidelity Codes\thanks{{This material is supported by the U.S. NSF under Grant DMS-1638521 to the Statistical and Applied Mathematical Sciences Institute.}}}

% Authors: full names plus addresses.
\author{Pulong Ma\thanks{Statistical and Applied Mathematical Sciences Institute and Duke University, 4051 Research Commons, Suite 300, 79 T.W. Alexander Drive, P.O. Box 110207, Durham, NC 27709, USA.
  (\email{pulong.ma@duke.edu}, \url{https://pulongma.github.io}).}
}

\usepackage{amsopn}

%%% Local Variables: 
%%% mode:latex
%%% TeX-master: "ex_article"
%%% End: 

\usepackage{mathrsfs}  

% Optional PDF information
\ifpdf
\hypersetup{
  pdftitle={Objective Bayesian Analysis of a Cokriging Model for Hierarchical Multifidelity Codes},
  pdfauthor={Pulong Ma}
}
\fi

% The next statement enables references to information in the
% supplement. See the xr-hyperref package for details.

%\externaldocument{ex_supplement}

% FundRef data to be entered by SIAM
%<funding-group specific-use="FundRef">
%<award-group>
%<funding-source>
%<named-content content-type="funder-name"> 
%</named-content> 
%<named-content content-type="funder-identifier"> 
%</named-content>
%</funding-source>
%<award-id> </award-id>
%</award-group>
%</funding-group>

%This manuscript mainly focus on objective Bayesian analysis of a cokriging model for hierarchical multifidelity codes, which has not been investigated in any previous works. We show that the existing Bayesian estimation for such models result in nonrobust estimates for model parameters, which can mess up the predictions and associated uncertainties. In addition, for the first time, we also derive closed-form expressions for the predictive distributions without the need of Monte Carlo approximations as in previous works. This also helps quantify uncertainties associated with predictions. We believe that this manuscript is of great interest and has importance contributions in UQ communities when surrogate models are built for multiple computer models. 

\begin{document}

\maketitle

% REQUIRED
\begin{abstract}
Autoregressive cokriging models have been widely used to emulate multiple computer models with different levels of fidelity. The dependence structures are modeled via Gaussian processes at each level of fidelity, where covariance structures are often parameterized up to a few parameters. The predictive distributions typically require intensive Monte Carlo approximations in previous works. This article derives new closed-form formulas to compute the means and variances of predictive distributions in autoregressive cokriging models that only depend on correlation parameters. For parameter estimation, we consider objective Bayesian analysis of such autoregressive cokriging models. We show that common choices of prior distributions, such as the constant prior and inverse correlation prior, typically lead to improper posteriors.  We also develop several objective priors such as the independent reference prior and the independent Jeffreys prior that are shown to yield proper posterior distributions. This development is illustrated with a borehole function in an eight-dimensional input space and applied to an engineering application in a six-dimensional input space. \end{abstract}

% REQUIRED
\begin{keywords}
Autoregressive cokriging; Multifidelity computer models; Objective priors; Recursive prediction.
\end{keywords}

% REQUIRED
\begin{AMS}
  60G15,  62F15, 62G08,  62K99, 62M20
\end{AMS}

\section{Introduction}
Complex computer codes have been widely used to solve mathematical models that represent real-world processes in virtually every field of science and engineering. They are often referred to as \emph{simulators} in Uncertainty Quantification (UQ) and computer experiments \cite{OHagan2006, Santner2018}. In practice, computer codes can be too timing-consuming to be used for adequately addressing UQ tasks. To overcome this bottleneck, Gaussian processes have been widely used as surrogate models to approximate simulators due to its computational advantages and attractive theoretical properties \cite{Sacks1989}.     

In real applications, computer codes can be run at different levels of accuracy due to sophistication of physics incorporated in mathematical models, accuracy of numerical solvers and resolutions of meshes; see \cite{Peherstorfer2018} for formal definition of multifidelity models. Several works have been proposed to combine output from computer codes at different fidelity levels based on a well-known geostatistical method called \emph{cokriging}; see Chapter 3 of \cite{Cressie1993}. The approach of cokriging to synthesizing multiple computer model outputs is originated in \cite{Kennedy2000}, which is developed based upon a first order Markov assumption that given output from a low-fidelity code run at an input, no more information can be learnt about the high-fidelity code with output from the low-fidelity code at any other input. The resulting cokriging model is often referred to as an \emph{autoregressive cokriging} model.  Several extensions of this autoregressive cokriging model have been proposed with increased model flexibility and Bayesian inference approaches \cite{Qian2008, Gratiet2013, Gratiet2014}. 

A common feature found in these works is that the predictive distribution for the high-fidelity code given both low and high fidelity output at a set of inputs as well as correlation parameters (or range parameters) requires numerical integration, which leads to intensive computations. Indeed, the predictive distribution can be available in a closed-form when other model parameters such as regression parameters and variance parameters are conditioned upon, but this leads to under-estimation of uncertainties associated with predictions. The uncertainty analysis about the quantity of interest, often a transformation of predictors, will hence suffer severely from this artifact. To avoid this, we derive a new recursive predictive formula so that predictive distributions are only conditioned upon the code output and correlation parameters. Closed-form predictive means and predictive variances are also derived at each code level and they can be computed without the need of Monte Carlo approximations unlike those in \cite{Qian2008, Gratiet2013}. These closed-form formulas have the capability to explicitly account for the uncertainty due to the estimation of regression parameters, scale discrepancy parameter and variance parameters. 

Inference about model parameters has been approached in several different ways in autoregressive cokriging models with focus on empirical Bayesian approaches. A particular challenge in autoregressive cokriging models is to estimate correlation parameters. To tackle this issue, \cite{Kennedy2000} assume independent noninformative priors for all the parameters, and then carry out numerical maximization for the marginal likelihood functions after integrating out regression parameters with respect to scale discrepancy parameters, variance parameters and correlation parameters.  In \cite{Qian2008}, conjugate priors are assumed for regression parameters, scale discrepancy parameters and variance parameters. For correlation parameters, proper gamma priors are assumed. The correlation parameters are then estimated by maximizing the corresponding marginal posteriors. For the code at the first level, standard nonlinear optimization is performed; while for the code at the second level, the corresponding posterior does not have a closed form and its evaluation requires numerical integration. This optimization procedure is then recast into a stochastic programming problem. To alleviate computational difficulties in \cite{Kennedy2000, Qian2008}, \cite{Gratiet2013} develop an efficient joint Bayesian estimation approach with either non-information priors or informative priors for all the model parameters except correlation parameters. Without further assuming prior distributions for correlation parameters, \cite{Gratiet2013} maximizes a concentrated restricted likelihood to obtain estimates of correlation parameters at each code level. We refer to this estimation approach as \emph{plug-in MLE} hereafter. However, whether the choices of priors in \cite{Qian2008, Gratiet2013} will lead to good estimates is not discussed. We show that vague proper priors for correlation parameters in \cite{Qian2008} lead to an improper posterior. Thus, the usage of such vague priors will not solve but hide the problem; see \cite{Berger2006} for detailed discussion and references therein. We also show that the concentrated restricted likelihood in \cite{Gratiet2013} with noninformative priors and informative priors (when chosen to be vague) can have nonrobust estimates in autoregressive cokriging models, where nonrobustness is defined to be the situation where the correlation matrix becomes either singular or near-diagonal when correlation parameters go to infinity or zero; see \cite{Gu2018} for detailed discussions. 

This article has two primary objectives that are of interest from computational and theoretical perspectives. The first objective is the derivation of new formulas for predictive distributions of the code output at any level over a new input given code output and correlation parameters. We show that realizations from predictive distributions can be simulated based upon a set of conditional distributions. The predictive means and predictive variances can be computed exactly at any fidelity level in a computationally efficient way. The new closed-form predictive formulas explicitly take into account uncertainties due to the estimation of the location and scale parameters. The second objective is the development of objective priors. The objective priors can be used as default priors when elicitation of prior information is challenging. It also enables more accurate uncertainty estimation in the predictive distribution than the typical maximum-likelihood based approaches with commonly-used noninformative priors.

The rest of this paper is organized as follows. Section~\ref{sec: univariate model} reviews the autoregressive cokriging models for analyzing multifidelity codes. In Section~\ref{sec: prediction in univariate model}, new closed-form expressions are derived for predictive distributions at all code levels conditioned on all code output and correlation parameters. The $s$-level cokriging model turns out to have the same computational cost as $s$ independent kriging models for both parameter estimation and prediction. Section~\ref{sec: objective Bayes} begins with discussions on commonly-chosen noninformative priors and proves that the resulting posteriors are improper with such noninformative priors. The objective priors including independent reference priors and independent Jeffreys priors are then developed and are shown to yield proper posteriors. Section~\ref{sec: numerical demonstration} gives several numerical examples to demonstrate the new development. Section~\ref{sec: discussion} is concluded with further discussions.  

\section{The Autoregressive Cokriging Model} \label{sec: univariate model}

Suppose that we have $s$ levels of code $y_{1}(\cdot),\ldots,$ $y_{s}(\cdot)$, where the code $y_{t}(\cdot)$ is assumed to be more accurate than the code $y_{t-1}(\cdot)$ for $t=2,\ldots,s$. Let $\calX$ be a compact subset of $\mathbb{R}^{d}$, which is assumed to be the input space of computer code. Further assume that the code $y_{t}(\cdot)$ is run at a set of input values denoted by $\calX_{t}\subset\calX$ for $t=1,\ldots,s$, where $\calX_{t}$ is assumed to contain $n_{t}$ input values. Consider the following autoregressive model as in \cite{Kennedy2000, Gratiet2013}:
\begin{align} \label{eqn: AR}
y_{t}(\bfx)=\gamma_{t-1}y_{t-1}(\bfx)+\delta_{t}(\bfx), \, \bfx  \in \calX,
\end{align}
for $t=2,\ldots,s$, where $y_{t-1}(\cdot)$ is an unknown function of input. $\delta_{t}(\cdot)$ is the unknown location discrepancy function representing the local adjustment from level $t-1$ to level $t$. $\gamma_{t-1}$ is the scale discrepancy representing the scale change from level $t-1$ to level $t$. Notice that currently $\gamma_{t-1}$ does not depend on input. A more general assumption is to take $\gamma_{t-1}(\cdot)$ to be a basis-function representation, i.e., $\gamma_{t-1}(\cdot)=\bfk_{t-1}(\cdot)^{\top}\bfzeta_{t-1}$ for $t=2,...,s$, where $\bfk_{t-1}(\cdot)$ is a vector of basis functions and $\bfzeta_{t-1}$ is a vector of unknown coefficients with dimension $q_{\zeta}$. The development in this article is true for this general parameterization. Without loss of generality, we focus on the simple form, i.e., $\bfk_{t-1}$ is assumed to be  1 and $\bfzeta_{t-1}$ is assumed to be a scalar parameter.   

To account for uncertainties in the unknown functions $y_{1}(\cdot)$
and $\delta_{t}(\cdot)$, Gaussian process priors can be assigned:
\begin{align}
\begin{split}y_{1}(\cdot)\mid\bfbeta_{1},\sigma_{1}^{2},\bfphi_{1} & \sim\mathcal{GP}(\bfh_{1}(\cdot)^{\top}\bfbeta_{1},\,\sigma_{1}^{2}r(\cdot,\cdot|\bfphi_{1})),\\
\delta_{t}(\cdot) & \sim\mathcal{GP}(\bfh_{t}(\cdot)^{\top}\bfbeta_{t},\,\sigma_{t}^{2}r(\cdot,\cdot|\bfphi_{t})),
\end{split}
\label{eqn: co-kriging model}
\end{align}
for $t=2,\ldots,s$, $i=1,\ldots,n_{t}$, where $r(\cdot,\cdot|\bfphi_{t})$ is a correlation function with correlation parameters $\bfphi_{t}$. A popular choice is to choose the product form of correlations with the power-exponential family and the Mat\'ern family.  $\bfh_{t}(\cdot)$ is a vector of (fixed) basis functions and $\bfbeta_{t}$ is a vector of unknown coefficients at code level $t$. $\sigma^2_t$ is the variance parameter.

The cokriging model defined by~\eqref{eqn: AR} and~\eqref{eqn: co-kriging model} has been used to model computer model output at different fidelity levels in previous works \cite{Kennedy2000, Gratiet2013} with hierarchically nested design, i.e., $\calX_{t}\subset\calX_{t-1}$. In \cite{Qian2008}, a measurement-error process is incorporated in \eqref{eqn: AR} to link field observations with computer model outputs at two fidelity levels. In these works, the assumption of hierarchically nested designs is imposed in order to allow for closed-form likelihood-based inference. Let ${\bfy}_{t}$
be a vector of output values at all inputs in ${\calX}_{t}$ at code level $t$. Let $\bfbeta:=(\bfbeta_1^\top, \ldots, \bfbeta_s^\top)^\top$, $\bfgamma:=(\gamma_1, \ldots, \gamma_{s-1})^\top$, $\bfsigma^2:=(\sigma^2_1, \ldots, \sigma^2_s)^\top$, $\bfphi:=(\bfphi_1^\top, \ldots, \bfphi_s^\top)^\top$,
and $\bfy=(\bfy_{1}^{\top},\ldots,\bfy_{s}^{\top})^{\top}$. Then the marginal likelihood is 
\begin{align} \label{eqn: sampling dist}
\begin{split}L({\bfy}\mid\bfbeta,\bfgamma,\bfsigma^{2},\bfphi) & =\pi({\bfy}_{1}\mid\bfbeta_{1},\sigma_{1}^{2},\bfphi_{1})\prod_{t=2}^{s}\pi({\bfy}_{t}\mid{\bfy}_{t-1},\gamma_{t-1},\bfbeta_{t},\sigma_{t}^{2},\bfphi_{t})\end{split},
\end{align}
where  
\begin{align}
\begin{split}\pi({\bfy}_{1}\mid\bfbeta_{1},\sigma_{1}^{2},\bfphi_{1}) & =\mathcal{N}({\bfH}_{1}\bfbeta_{1},\,\sigma_{1}^{2}{\bfR}_1),\\
\pi({\bfy}_{t}\mid {\bfy}_{t-1},\gamma_{t-1},\bfbeta_{t},\sigma_{t}^{2},\bfphi_{t}) & =\mathcal{N}({\bfH}_{t}\bfbeta_{t} + {W}_{t-1}\gamma_{t-1},\,\sigma_{t}^{2} {\bfR}_t),
\end{split}
\label{eqn: conditional sampling distributions}
\end{align}
with ${\bfH}_t:=\bfh_t({\calX}_t)$, $\bfR_t:=r(\calX_t, \calX_t\mid \phi_t)$, and ${W}_{t-1}:=y_{t-1}({\calX}_t)$, where $y_{t-1}(A):=[y_{t-1}(x),x\in A]$ is a vector of output values over inputs in $A$. This sampling distribution provides a convenient form to perform closed-form likelihood-based inference. 

\section{The Cokriging Predictor and Cokriging Variance} \label{sec: prediction in univariate model}

For any new input $\bfx_{0}\in \calX$, the goal is to make prediction for $y_s(\bfx_0)$ based upon the code output $\bfy$. In \cite{Kennedy2000}, a closed-form predictive distribution is derived for $\pi(y_s(\bfx_0) \mid \bfy, \bfgamma, \bfsigma^2, \bfphi)$, which only accounts for the uncertainty due to estimation of $\bfbeta$ and has $O((\sum_{t=1} n_t)^3)$ computational cost. In \cite{Qian2008}, a closed-form predictive distribution is only given for $\pi(y_s(\bfx_0) \mid \bfy, \bfbeta, \bfgamma, \bfsigma^2, \bfphi)$, which also has $O((\sum_{t=1}^s n_t)^3)$ computational cost. To account for uncertainty due to estimation of model parameters $\bfbeta, \bfgamma, \bfsigma^2, \bfphi$, Monte Carlo approximation is required. In \cite{Gratiet2013}, a closed-form predictive distribution is also only given for $\pi(y_s(\bfx_0) \mid \bfy, \bfbeta, \bfgamma, \bfsigma^2, \bfphi)$, and Monte Carlo approximation is used to account for uncertainty due to estimation of the model parameters. \cite{Gratiet2013} also develops an iterative formula to invert the $(\sum_{t=1}^s n_t)\times (\sum_{t=1}^s n_t)$ correlation matrix of code output at all levels, which reduces computation cost to $O(\sum_{t=1}^s n_t^3)$. 

In what follows, we give a new way to derive closed-form predictive distributions for $\pi(y_t(\bfs_0) \mid \bfy, \bfphi), t=1, \ldots, s$ that not only explicitly account for the uncertainty due to estimation of $\bfbeta, \bfgamma, \bfsigma^2$ but also has $O(\sum_{t=1}^s n_t^3)$ computational cost. The formula for these predictive distributions is derived based upon the idea that the new input $\bfx_0$ is added to each ${\calX}_t$ such that a hierarchically nested design can be obtained.

To deal with these unknown parameters $\bfbeta, \bfgamma, \bfsigma^2$, the following standard reference priors are used for the location-scale parameters:
$\bfbeta,\bfgamma,\bfsigma^{2}$: 
\begin{align} \label{eqn: location-scale prior}
\begin{split}
\pi^{R}(\bfbeta_{1},\sigma_{1}^{2}) & \propto\frac{1}{\sigma_{1}^{2}},\\
\pi^{R}(\bfbeta_{t},\gamma_{t-1},\sigma_{t}^{2}) & \propto\frac{1}{\sigma_{t}^{2}},\,t=2,\ldots,s.
\end{split}
\end{align}
 
The following lemma gives the predictive distribution of $\bfy(\bfx_0):=(y_1(\bfx_0), \ldots, y_s(\bfx_0))^\top$ given $\bfy$ and $\bfphi$.

\begin{lemma} \label{lem: conditional predictive dists}
According to the cokriging model defined in \eqref{eqn: AR} and \eqref{eqn: co-kriging model}, the joint predictive distribution of $\bfy(\bfx_0)$ given all the code output $\bfy$ and correlation parameters $\bfphi$ is 
\begin{align}\label{eqn: joint predictive dist under noninformative priors}
\begin{split}
\pi(\bfy(\bfx_0)\mid \bfy,\bfphi) &=\pi(y_1(\bfx_0)\mid \bfy_{1},\bfphi_{1})\prod_{t=2}^{s-1}\pi(y_t(\bfx_0)\mid {\bfy}_{t-1}, y_{t-1}(\bfx_0), \bfy_{t},\bfphi_{t})\\
&\quad \times \pi(y_s(\bfx_0) \mid y_{s-1}(\bfx_0), \bfy_s, \bfphi_s).
\end{split}
\end{align}
Here conditional distributions on the right-hand side
are Student $t$-distributions $t_{n_t-q_t}(\mu_t(\bfx_0), $ $ \Sigma_t(\bfx_0))$ given by
\begin{align*}
\begin{split}
\mu_{t}(\bfx_0) & :=\bfX_t^\top(\bfx_0)\hat{\bfb}_t+\bfr_t^\top(\bfx_0)\bfR_t^{-1}(\bfy_{t}-\bfX_t\hat{\bfb}_t),\\
\Sigma_{t}(\bfx_0) & :=\hat{\sigma}^2_t c_t^*,
\end{split}
\end{align*}
with 
\begin{align*}
\begin{split}
\hat{\sigma}^2_t & :=(\bfy_{t}-\bfX_t\hat{\bfb}_t)^{\top}\bfR_t^{-1}(\bfy_{t}-\bfX_t\hat{\bfb}_t) / (n_t-q_t),\\
c_t^* & :=r(\bfx_0,\bfx_0|\bfphi_{t})-\bfr_t^\top(\bfx_0)\bfR_t^{-1}\bfr_t(\bfx_0) \\
&\quad + [\bfX_{t}(\bfx_0)-\bfX_{t}^{\top}\bfR_{t}^{-1}\bfr_t(\bfx_0)]^{\top}(\bfX_{t}^{\top}\bfR_{t}^{-1}\bfX_{t})^{-1}[\bfX_{t}(\bfx_0)-\bfX_{t}^{\top}\bfR_{t}^{-1}\bfr_t(\bfx_0)],
\end{split}
\end{align*}
where $\hat{\bfb}_t:=(\hat{\bfbeta}^\top_t, \hat{\gamma}_{t-1})^\top = (\bfX_t^\top \bfR_t^{-1} \bfX_t)^{-1}\bfX_t^\top \bfR_t^{-1} \bfy_t$.  $\bfr_t(\bfx_0):=r({\calX}_t, \bfx_0\mid \bfphi_t)$. $\bfX_1:=\bfH_1$.  $\bfX_t:=[{\bfH}_{t}, y_{t-1}(\calX_t)]$ for $t>1$. $\bfX_1(\bfx_0):=\bfh_1(\bfx_0)$ and $\bfX_t(\bfx_0):=[\bfh_t^\top(\bfx_0), y_{t-1}(\bfx_0)]^\top$ for $t>1$. $q_t$ is the number of columns in $\bfX_t$. Notice that $\hat{\bfbeta}_t$ is the generalized least squares estimate of $\bfbeta_t$ and $\hat{\gamma}_{t-1}$ is the generalized least squares estimate of $\gamma_{t-1}$.
\end{lemma}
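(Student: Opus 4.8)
The plan is to exploit the hierarchically nested structure of the augmented design: for each level $t$, adjoin the new input $\bfx_0$ to $\calX_t$ so that $\calX_t \cup \{\bfx_0\}$ together with the existing $\calX_{t-1}$ still forms a nested design (recall $\calX_t \subset \calX_{t-1}$, and adding $\bfx_0$ to every level preserves this). Then the joint vector $(\bfy^\top, \bfy(\bfx_0)^\top)^\top$ is generated by exactly the same autoregressive cokriging model as in \eqref{eqn: AR}--\eqref{eqn: co-kriging model}, just on the enlarged designs. Applying the factorization \eqref{eqn: sampling dist}--\eqref{eqn: conditional sampling distributions} to the augmented data and then marginalizing/conditioning appropriately will produce the claimed product form \eqref{eqn: joint predictive dist under noninformative priors}, with the level-$t$ factor depending only on $(\bfy_{t-1}, y_{t-1}(\bfx_0), \bfy_t, \bfphi_t)$ because of the first-order Markov property built into the model.

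First I would set up the augmented model carefully, defining $\tilde{\bfy}_t := (\bfy_t^\top, y_t(\bfx_0))^\top$, with corresponding design matrices $\tilde{\bfX}_t$ obtained by appending the row $\bfX_t(\bfx_0)^\top$, and correlation matrix $\tilde{\bfR}_t$ obtained by bordering $\bfR_t$ with the vector $\bfr_t(\bfx_0)$ and the diagonal entry $r(\bfx_0,\bfx_0\mid\bfphi_t)$. The joint density of $\tilde{\bfy} := (\tilde{\bfy}_1^\top,\ldots,\tilde{\bfy}_s^\top)^\top$ factors as in \eqref{eqn: sampling dist}. Then I would integrate out $(\bfbeta_t,\gamma_{t-1},\sigma_t^2)$ against the reference priors in \eqref{eqn: location-scale prior}: this is the standard Bayesian linear-model-with-unknown-variance computation, which yields a multivariate Student-$t$ marginal for each $\tilde{\bfy}_t$ (conditional on the relevant lower-level quantities and $\bfphi_t$) with degrees of freedom $n_t - q_t$. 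Conditioning this joint-$t$ on its first $n_t$ coordinates (the observed $\bfy_t$) gives a univariate Student-$t$ for $y_t(\bfx_0)$; the standard conditioning formula for the multivariate $t$-distribution delivers precisely the stated mean $\mu_t(\bfx_0)$ and scale $\hat\sigma_t^2 c_t^*$, where the extra additive term in $c_t^*$ is the familiar inflation accounting for estimation of $\bfb_t$, and the degrees of freedom are preserved as $n_t - q_t$. The only subtlety at the top level $t=s$ is that the design is not nested above it, so $\bfy_s$ does not contain information forcing $y_s(\bfx_0)$ through $y_{s-1}$ at other inputs beyond what $y_{s-1}(\bfx_0)$ already carries; hence the last factor conditions only on $y_{s-1}(\bfx_0), \bfy_s, \bfphi_s$.

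The key steps, in order, are: (i) verify that augmenting each $\calX_t$ with $\bfx_0$ preserves the nested-design hypothesis and hence that the augmented model is again of the form \eqref{eqn: AR}--\eqref{eqn: co-kriging model}; (ii) write the augmented likelihood in the factored form \eqref{eqn: sampling dist}; (iii) integrate out the location-scale parameters at each level against \eqref{eqn: location-scale prior} to obtain a multivariate Student-$t$ for each $\tilde{\bfy}_t$ given $(\tilde{\bfy}_{t-1},\bfphi_t)$ --- here the GLS estimates $\hat{\bfb}_t$ and $\hat\sigma_t^2$ appear naturally; (iv) use the Markov/conditional-independence structure to reduce the conditioning set from all of $\bfy$ to just the previous-level output values, establishing the product form \eqref{eqn: joint predictive dist under noninformative priors}; and (v) apply the conditioning formula for a multivariate $t$ (partitioning $\tilde{\bfy}_t$ into $\bfy_t$ and $y_t(\bfx_0)$) to read off $\mu_t(\bfx_0)$, $\Sigma_t(\bfx_0)$, and the degrees of freedom.

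I expect the main obstacle to be step (iii)--(v) done simultaneously in a way that tracks the degrees of freedom correctly: after integrating out $(\bfbeta_t,\gamma_{t-1},\sigma_t^2)$ on the \emph{augmented} data one gets a $t$ on $n_t+1$ coordinates with $n_t+1-q_t$ degrees of freedom, and conditioning on $n_t$ of them must be shown to return a $t$ with $n_t - q_t$ degrees of freedom and the stated scale --- i.e., one must be careful that $\hat\sigma_t^2$ and $\hat{\bfb}_t$ are computed from $\bfy_t$ alone (not the augmented vector), which is exactly what the conditional-$t$ formula guarantees but which requires the matrix identity relating the augmented GLS quantities to the unaugmented ones plus the Schur-complement term $c_t^*$. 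A secondary point needing care is justifying that the improper reference priors \eqref{eqn: location-scale prior} yield a proper conditional predictive distribution here (they do, since $n_t > q_t$ is implicitly assumed), so that the formal manipulations are legitimate.
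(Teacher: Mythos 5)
Your proposal is correct and follows essentially the same route as the paper: augment each design $\calX_t$ with $\bfx_0$ to preserve nesting, factor the joint distribution level by level, and reduce each factor to the standard Bayesian Gaussian-process-regression computation that yields a Student $t_{n_t-q_t}$ with the stated mean and scale. The only (immaterial) difference is the order of operations within each level --- the paper first conditions the joint normal of $(y_t(\bfx_0),\bfy_t)$ on $\bfy_t$ and then integrates out $(\bfbeta_t,\gamma_{t-1},\sigma_t^2)$, whereas you integrate out the location-scale parameters first and then condition the resulting multivariate $t$ --- and your degrees-of-freedom bookkeeping for that variant is the right thing to watch.
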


\begin{proof}
See Appendix~\ref{app: conditional predictive dist}.
\end{proof}

This result shows that one can generate a random sample from the predictive distribution $\pi(\bfy(\bfx_0) \mid \bfy, \bfphi)$ by sequentially sampling from a collection of conditional distributions. Notice that samples are obtained across all the code levels. The computation associated with each conditional distribution only requires $O(n_t^3)$ flops for $t=1, \ldots, s$. Before stating the next theorem that provides a convenient way to exactly compute the predictive mean and predictive variance in the predictive distribution $\pi(\bfy(\bfx_0) \mid \bfy, \bfphi)$, we define the cokriging predictor and cokriging variance at each code level. 

\begin{definition}
Let $\hat{\bfy}(\bfx_0)=(\hat{y}_1(\bfx_0), \ldots, \hat{y}_s(\bfx_0))^\top$ be a vector of predictive means with $\hat{y}_t(\bfx_0):=E[y_t(\bfx_0)\mid \bfy, \bfphi]$ and $\hat{\bfv}(\bfx_0)=(\hat{v}_1(\bfx_0),\ldots, \hat{v}_s(\bfx_0))^\top$ be a vector of predictive variances with $\hat{v}_t(\bfx_0):=Var[y_t(\bfx_0)\mid \bfy, \bfphi]$. In what follows, $\hat{\bfy}(\bfx_0)$ is called the cokriging predictor and $\hat{\bfv}(\bfx_0)$ is called the cokriging variance for all levels of code at new input $\bfx_0$.
\end{definition}

\begin{theorem} \label{thm: predictive mean and variance}
Suppose that $n_t-q_t>2$ such that the $t$-distributions in \eqref{eqn: joint predictive dist under noninformative priors} have valid variances. Then the cokriging predictor and cokriging variance at code level $t$ are given by 
\begin{align} \label{eqn: cokriging predictor}
    \begin{split}
    \hat{y}_t(\bfx_0) &= \bff_t^\top(\bfx_0) \hat{\bfb}_t + \bfr_t^\top(\bfx_0) \bfR_t^{-1}(\bfy_t - \bfX_t \hat{\bfb}_t), \\
    \hat{v}_t(\bfx_0) &= \hat{\gamma}_{t-1}^2\hat{v}_{t-1}(\bfx_0) + \frac{n_t-q_t}{n_t-q_t-2} \hat{\sigma}^2_t \left\{r(\bfx_0, \bfx_0|\bfphi_t) - \bfr_t^\top(\bfx_0) \bfR_t^{-1} \bfr_t(\bfx_0) + \kappa_t \right\},
    \end{split}
\end{align}
where $\bff_1(\bfx_0):=\bfh_1(\bfx_0)$, $\bff_t(\bfx_0):=[\bfh_t^\top(\bfx_0), \hat{y}_{t-1}^\top(\bfx_0)]^\top$ for $t>1$, $\hat{v}_0:=0$, and 
\begin{align*}
    \kappa_t &:=  [\bff_{t}(\bfx_0)-\bfX_{t}^{\top}\bfR_{t}^{-1}\bfr_t(\bfx_0)]^{\top}(\bfX_{t}^{\top}\bfR_{t}^{-1}\bfX_{t})^{-1}[\bff_{t}(\bfx_0)-\bfX_{t}^{\top}\bfR_{t}^{-1}\bfr_t(\bfx_0)] \\
    &\quad + \hat{v}_{t-1}(\bfx_0) \left\{ y_{t-1}^\top(\calX_t) \bfQ^H_t y_{t-1}(\calX_t) \right\}^{-1}.
\end{align*}
with $\bfQ^H_t := \bfR_t^{-1} - \bfR_t^{-1} \bfH_t (\bfH_t^\top \bfR_t^{-1} \bfH_t)^{-1} \bfH_t^\top \bfR_t^{-1}$.
%For code level $t=1$, the cokriging predictor has the same form as above, while the cokriging variance is given by  
%\begin{align*}
%    \hat{v}_{1} &= \hat{\sigma}^2_1 \{r(\bfx_0, \bfx_0|\bfphi_1) - \bfr_1^\top(\bfx_0) \bfR_1^{-1} \bfr_1(\bfx_0) + \kappa_1\} (n_1-q_1)/(n_1-q_1-2),
%\end{align*}
%with     
%$\kappa_1 = [\bfh_{1}(\bfx_0)-\bfH_{1}^{\top}\bfR_{1}^{-1}\bfR_1(\bfx_0)]^{\top}(\bfH_{1}^{\top}\bfR_{1}^{-1}\bfH_{1})^{-1}[\bfh_{1}(\bfx_0)-\bfH_{1}^{\top}\bfR_{1}^{-1}\bfR_1(\bfx_0)]$ and $n_1-q_1-2>0$.
\end{theorem}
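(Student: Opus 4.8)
The plan is to establish both formulas by induction on the code level $t$, combining the chain factorization of Lemma~\ref{lem: conditional predictive dists} with the tower property for the mean and the law of total variance for the variance, where in the inductive step the conditioning variable is $y_{t-1}(\bfx_0)$. The structural fact I would exploit first is that, given $\bfy$ and $\bfphi$, every quantity entering the level-$t$ conditional $t_{n_t-q_t}(\mu_t(\bfx_0),\Sigma_t(\bfx_0))$ is a fixed function of the observed data except for the scalar $y_{t-1}(\bfx_0)$, which enters only through $\bfX_t(\bfx_0)=[\bfh_t^\top(\bfx_0),y_{t-1}(\bfx_0)]^\top$ (the matrix $\bfX_t=[\bfH_t,y_{t-1}(\calX_t)]$, the vector $\bfr_t(\bfx_0)$, $\bfR_t$, $\hat{\bfb}_t$ and $\hat{\sigma}^2_t$ all depend on the observed $y_{t-1}(\calX_t)$ and $\bfy_t$ but not on $y_{t-1}(\bfx_0)$). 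Hence $\mu_t(\bfx_0)=\hat{\gamma}_{t-1}\,y_{t-1}(\bfx_0)+g_t$ is affine in $y_{t-1}(\bfx_0)$ with slope $\hat{\gamma}_{t-1}$ and constant term $g_t:=\bfh_t^\top(\bfx_0)\hat{\bfbeta}_t+\bfr_t^\top(\bfx_0)\bfR_t^{-1}(\bfy_t-\bfX_t\hat{\bfb}_t)$, whereas $c_t^*$ is a quadratic function of $y_{t-1}(\bfx_0)$. For the base case $t=1$ there is no lower level: $\bfX_1=\bfH_1$, $\bfX_1(\bfx_0)=\bff_1(\bfx_0)$, and $\pi(y_1(\bfx_0)\mid\bfy_1,\bfphi_1)$ is already a fixed distribution given $(\bfy,\bfphi)$; since $n_1-q_1>2$ its mean and variance exist and equal $\mu_1(\bfx_0)$ and $\frac{n_1-q_1}{n_1-q_1-2}\hat{\sigma}^2_1 c_1^*$, which are exactly \eqref{eqn: cokriging predictor} once one sets $\hat{v}_0:=0$, so that the $\hat{v}_0$-term in $\kappa_1$ and the term $\hat{\gamma}_0^2\hat{v}_0$ both vanish.

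For the mean in the inductive step, I would assume $E[y_{t-1}(\bfx_0)\mid\bfy,\bfphi]=\hat{y}_{t-1}(\bfx_0)$ and take $E[\,\cdot\mid\bfy,\bfphi]$ of the identity $E[y_t(\bfx_0)\mid y_{t-1}(\bfx_0),\bfy,\bfphi]=\mu_t(\bfx_0)$, which gives $\hat{y}_t(\bfx_0)=\hat{\gamma}_{t-1}\hat{y}_{t-1}(\bfx_0)+g_t$; this coincides with $\bff_t^\top(\bfx_0)\hat{\bfb}_t+\bfr_t^\top(\bfx_0)\bfR_t^{-1}(\bfy_t-\bfX_t\hat{\bfb}_t)$ because $\bff_t(\bfx_0)=[\bfh_t^\top(\bfx_0),\hat{y}_{t-1}^\top(\bfx_0)]^\top$ and $\hat{\bfb}_t=(\hat{\bfbeta}_t^\top,\hat{\gamma}_{t-1})^\top$, which is the first line of \eqref{eqn: cokriging predictor}.

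For the variance, I would also assume $\mathrm{Var}[y_{t-1}(\bfx_0)\mid\bfy,\bfphi]=\hat{v}_{t-1}(\bfx_0)$ and apply the law of total variance conditioning on $y_{t-1}(\bfx_0)$, obtaining $\hat{v}_t(\bfx_0)=\mathrm{Var}[\mu_t(\bfx_0)\mid\bfy,\bfphi]+E[\frac{n_t-q_t}{n_t-q_t-2}\hat{\sigma}^2_t c_t^*\mid\bfy,\bfphi]$, using that a $t_\nu(\mu,\Sigma)$ variate has variance $\frac{\nu}{\nu-2}\Sigma$ (legitimate since $n_t-q_t>2$). The first term equals $\hat{\gamma}_{t-1}^2\hat{v}_{t-1}(\bfx_0)$ by the affine form of $\mu_t$. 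For the second term I would write $\bfX_t(\bfx_0)-\bfX_t^\top\bfR_t^{-1}\bfr_t(\bfx_0)=\bfv_t+(y_{t-1}(\bfx_0)-\hat{y}_{t-1}(\bfx_0))\,\bfl$, where $\bfv_t:=\bff_t(\bfx_0)-\bfX_t^\top\bfR_t^{-1}\bfr_t(\bfx_0)$ is deterministic given $(\bfy,\bfphi)$ and $\bfl$ is the $q_t$-vector selecting the $\gamma_{t-1}$ coordinate, expand the quadratic form in $c_t^*$, observe that the cross term has expectation zero while the pure quadratic term has expectation $\hat{v}_{t-1}(\bfx_0)\,\bfl^\top(\bfX_t^\top\bfR_t^{-1}\bfX_t)^{-1}\bfl$, and finally identify $\bfl^\top(\bfX_t^\top\bfR_t^{-1}\bfX_t)^{-1}\bfl$ — the lower-right entry of $(\bfX_t^\top\bfR_t^{-1}\bfX_t)^{-1}$ — with $\{y_{t-1}^\top(\calX_t)\bfQ^H_t y_{t-1}(\calX_t)\}^{-1}$ by the Schur-complement formula applied to the $2\times 2$ block partition of $\bfX_t^\top\bfR_t^{-1}\bfX_t$ induced by $\bfX_t=[\bfH_t,y_{t-1}(\calX_t)]$. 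Collecting the pieces yields $E[c_t^*\mid\bfy,\bfphi]=r(\bfx_0,\bfx_0\mid\bfphi_t)-\bfr_t^\top(\bfx_0)\bfR_t^{-1}\bfr_t(\bfx_0)+\kappa_t$ and hence the second line of \eqref{eqn: cokriging predictor}.

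I expect the only genuinely delicate step to be the bookkeeping of which quantities are random versus fixed given $(\bfy,\bfphi)$, together with the quadratic expansion of $c_t^*$ — in particular, checking that the cross term drops out and that the residual quadratic coefficient matches $\{y_{t-1}^\top(\calX_t)\bfQ^H_t y_{t-1}(\calX_t)\}^{-1}$ through the Schur complement. Everything else is a routine recursion anchored at the $t=1$ base case, and no further linear algebra beyond the standard block-inverse identity is needed.
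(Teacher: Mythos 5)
Your proposal is correct and follows essentially the same route as the paper's proof: the law of total expectation for $\hat{y}_t(\bfx_0)$ and the law of total variance for $\hat{v}_t(\bfx_0)$, in both cases conditioning on $y_{t-1}(\bfx_0)$ and exploiting that $\mu_t(\bfx_0)$ is affine in $y_{t-1}(\bfx_0)$ with slope $\hat{\gamma}_{t-1}$. Your explicit expansion of $c_t^*$ — splitting off the $(y_{t-1}(\bfx_0)-\hat{y}_{t-1}(\bfx_0))\,\bfl$ term, killing the cross term, and identifying $\bfl^\top(\bfX_t^\top\bfR_t^{-1}\bfX_t)^{-1}\bfl$ with $\{y_{t-1}^\top(\calX_t)\bfQ^H_t y_{t-1}(\calX_t)\}^{-1}$ via the Schur complement — correctly supplies the computation of $E(c_t^*\mid\bfy,\bfphi)$ that the paper only asserts.
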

\begin{proof}
See Appendix~\ref{app: predictive mean and variance}.
\end{proof}

Theorem~\ref{thm: predictive mean and variance} shows that the predictive mean and predictive variance in~\eqref{eqn: cokriging predictor} can be computed exactly with computational cost $O(\sum_{t=1}^s n_t^3)$. As a byproduct, predictions at code levels from $t=1$ to $t=s-1$ are obtained automatically.  For $t=1$, the predictive mean and predictive variance in the autoregressive cokriging model are exactly the universal kriging predictor and universal kriging variance in a kriging model. For $t>1$, the cokriging predictor is a sum of a kriging predictor and an additional constant, i.e., $\hat{y}_t(\bfx_0)=\bfh_t^\top(\bfx_0)\hat{\bfbeta}_t + \bfr_t^\top(\bfx_0)\bfR_t^{-1}(\bfy_t - \bfH_t\hat{\bfbeta}_t) + [\hat{y}_{t-1}(\bfx_0) + \bfr_t^\top(\bfx_0)\bfR_t^{-1}W_{t-1}]\hat{\gamma}_{t-1}$. The computational cost for both parameter estimation and prediction in an $s$-level autoregressive cokriging model is equivalent to the one in $s$ independent kriging models. This predictive distribution allows us to explicitly integrate out models parameters $\bfbeta, \bfgamma, \bfsigma^2$ except the range parameters $\bfphi$, and hence carries several advantages over the predictive distribution given in \cite{Gratiet2013}. Specifically, the predictive distribution in \cite{Gratiet2013} is a normal distribution when all model parameters $\{\bfbeta, \bfgamma, \bfsigma^2, \bfphi\}$ are conditioned upon. Thus, intensive computation of Monte Carlo approximations is required to account for the uncertainty due to estimation of $\{\bfbeta, \bfgamma, \bfsigma^2\}$ in order to derive the predictive distribution $\pi(y_s(\bfx_0)\mid \bfy, \bfphi)$.

Theorem~\ref{thm: predictive mean and variance} is the first result that gives the recursive formula for predictive distributions in autoregressive cokriging models where only the correlation parameters are conditioned upon. Our proposed recursive formula differs from the one developed by \cite{Gratiet2014} in the following aspect. \cite{Gratiet2014} use a cokriging model that is different from what is presented in this paper. In particular, \cite{Gratiet2014} represent the higher-fidelity code output $y_t(\cdot)$ as a transformation of the conditional distribution of $y_{t-1}(\cdot)$ given the code output $\bfy_{t-1}^{\mathscr{D}} :=\{ \bfy_1, \ldots, \bfy_{t-1} \}$ and model parameters $\{\bfbeta_{t-1}, \gamma_{t-2}, \sigma^2_{t-1}, \bfphi_{t-1} \}$. In what follows, the cokriging model in \cite{Gratiet2014} will be referred to as the \emph{recursive cokriging} model. Essentially, this recursive cokriging model aims at modeling the \emph{conditional distribution} $\pi(y_t(\cdot) \mid \bfy_{t}^{\mathscr{D}}, \bfbeta_{t}, \gamma_{t-1}, \sigma^2_{t}, \bfphi_{t})$ directly. Proposition 2 of \cite{Gratiet2014} shows that this recursive cokriging model allows the computation of the predictive distribution $\pi(y_s(\cdot) \mid \bfy_{s}^{\mathscr{D}}, \bfphi_{s})$ at the highest fidelity level in a recursive fashion with the same computational cost as the one in Theorem~\ref{thm: predictive mean and variance}. However, the recursive cokriging model does not give the predictive distributions $\pi(y_t(\cdot) \mid \bfy_{s}^{\mathscr{D}}, \bfphi_{s})$ at intermediate fidelity levels for $t=1, \ldots, s-1$, since it only provides the conditional distribution $\pi(y_{t}(\cdot) \mid \bfy_{t}^{\mathscr{D}}, \bfphi_{t})$ for $t=1, \ldots, s-1$ at intermediate fidelity levels and the predictive distribution $\pi(y_s(\cdot) \mid \bfy_{s}^{\mathscr{D}}, \bfphi_{s})$ at the highest fidelity level. It is clear that the set of conditional distributions is different from the set of predictive distributions at intermediate fidelity levels. In the design of experiments for multifidelity codes, it is often very useful to obtain both the predictive distribution at the highest fidelity level and the predictive distributions at intermediate fidelity levels.

The following corollary highlights the properties of autoregressive cokriging predictors in Theorem~\ref{thm: predictive mean and variance}.

\begin{corollary} \label{cor: cokriging and UK}
Let $\bfx_0$ be a new input in the domain $\mathcal{X}$. Let $\hat{y}_{t}^{K}(\bfx_0):=E\{y_t(\bfx_0) \mid \bfy_t, \bfphi_t\}$  and  $\hat{v}_t^{K}(\bfx_0): = Var\{ y_t(\bfx_0) \mid \bfy_t, \bfphi_t\}$ be the kriging predictor and kriging variance based on data $\{\bfy_{t}, \calX_{t}\}$ with fixed basis functions given by $\bfX_t$. 
%Then we have the following results
%\begin{enumerate}
    %\item  
    If $\bfx_0 \in \calX_{t} \setminus \calX_{t+1}$ and  with $t=1, \ldots, s-1$, we have $\hat{y}_{\ell}(\bfx_0) = \hat{y}_{\ell}^K(\bfx_0)=y_t(\bfx_0)$ and $\hat{v}_{\ell}(\bfx_0) = \hat{v}_{\ell}^K(\bfx_0) = 0$ for $\ell = 1, \ldots, t$.
    %\item If $\bfx_0 \notin \calX_{t}$ with $t=1, \ldots, s$, we have 
    If $\bfx_0\notin \calX_t$, we have  $\hat{v}_t(\bfx_0)\geq \hat{v}_t^K(\bfx_0)$, where the equality holds when the condition that $\bfx_0\in \mathcal{X}_{t-1}$ is further imposed. If $\bfx_0 \notin \calX_{t-1}$, we have $\hat{v}_t(\bfx_0)> \hat{v}_t^K(\bfx_0)$.
%\end{enumerate}
\end{corollary}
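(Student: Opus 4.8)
The plan is to exploit the recursive structure of Theorem~\ref{thm: predictive mean and variance} together with two elementary facts about kriging: first, that a (co)kriging variance vanishes exactly at design points, and second, that adding an input to a Gaussian process design cannot increase the predictive variance elsewhere (conditioning on more data reduces variance). I would split the proof into the two bulleted cases exactly as stated.

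For the first case, suppose $\bfx_0\in\calX_t\setminus\calX_{t+1}$. Because the design is hierarchically nested, $\bfx_0\in\calX_\ell$ for every $\ell=1,\ldots,t$. I would argue by induction on $\ell$. For $\ell=1$: since $\bfx_0\in\calX_1$, the row $\bfr_1^\top(\bfx_0)$ coincides with a row of $\bfR_1$, so $\bfR_1^{-1}\bfr_1(\bfx_0)$ is a canonical basis vector $\bfe_i$ and $\bff_1^\top(\bfx_0)=\bfh_1(\bfx_0)^\top=\bfH_1^\top\bfe_i$; plugging into the formula for $\hat y_1(\bfx_0)$ gives $\bfe_i^\top\bfX_1\hat\bfb_1+\bfe_i^\top(\bfy_1-\bfX_1\hat\bfb_1)=\bfe_i^\top\bfy_1=y_1(\bfx_0)$, and the standard kriging identity gives $\hat v_1^K(\bfx_0)=0$, while $r(\bfx_0,\bfx_0|\bfphi_1)-\bfr_1^\top(\bfx_0)\bfR_1^{-1}\bfr_1(\bfx_0)=0$ and $\kappa_1=0$ (the first term vanishes since $\bff_1(\bfx_0)-\bfX_1^\top\bfR_1^{-1}\bfr_1(\bfx_0)=\bfX_1^\top\bfe_i-\bfX_1^\top\bfe_i=0$, the second since $\hat v_0=0$), so $\hat v_1(\bfx_0)=0$. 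For the inductive step, assume $\hat y_{\ell-1}(\bfx_0)=y_{\ell-1}(\bfx_0)$ and $\hat v_{\ell-1}(\bfx_0)=0$. Then $\bff_\ell(\bfx_0)=[\bfh_\ell^\top(\bfx_0),\hat y_{\ell-1}(\bfx_0)]^\top=[\bfh_\ell^\top(\bfx_0),y_{\ell-1}(\bfx_0)]^\top=\bfX_\ell(\bfx_0)$, i.e.\ $\bff_\ell(\bfx_0)$ is literally a row of $\bfX_\ell$; combined with $\bfx_0\in\calX_\ell$ so that $\bfr_\ell(\bfx_0)$ is a column of $\bfR_\ell$, the same computation as the base case yields $\hat y_\ell(\bfx_0)=\bfe_i^\top\bfy_\ell=y_\ell(\bfx_0)$. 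For the variance, $r(\bfx_0,\bfx_0|\bfphi_\ell)-\bfr_\ell^\top(\bfx_0)\bfR_\ell^{-1}\bfr_\ell(\bfx_0)=0$, and in $\kappa_\ell$ the first quadratic form vanishes because $\bff_\ell(\bfx_0)-\bfX_\ell^\top\bfR_\ell^{-1}\bfr_\ell(\bfx_0)=\bfX_\ell^\top\bfe_i-\bfX_\ell^\top\bfe_i=0$, and the second term vanishes because it carries the factor $\hat v_{\ell-1}(\bfx_0)=0$; hence $\hat v_\ell(\bfx_0)=\hat\gamma_{\ell-1}^2\cdot 0+0=0$. The identical argument at level $\ell$ with the data $\{\bfy_\ell,\calX_\ell\}$ shows $\hat y_\ell^K(\bfx_0)=y_\ell(\bfx_0)$ and $\hat v_\ell^K(\bfx_0)=0$, completing the induction and the first case.

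For the second case, suppose $\bfx_0\notin\calX_t$. From \eqref{eqn: cokriging predictor},
\[
\hat v_t(\bfx_0)=\hat\gamma_{t-1}^2\hat v_{t-1}(\bfx_0)+\frac{n_t-q_t}{n_t-q_t-2}\hat\sigma_t^2\Bigl\{r(\bfx_0,\bfx_0|\bfphi_t)-\bfr_t^\top(\bfx_0)\bfR_t^{-1}\bfr_t(\bfx_0)+\kappa_t\Bigr\}.
\]
I would compare this with the universal kriging variance at level $t$, which in the same notation is $\hat v_t^K(\bfx_0)=\frac{n_t-q^H_t}{n_t-q^H_t-2}(\hat\sigma_t^K)^2\{r(\bfx_0,\bfx_0|\bfphi_t)-\bfr_t^\top(\bfx_0)\bfR_t^{-1}\bfr_t(\bfx_0)+\tilde\kappa_t\}$ with mean-basis design $\bfH_t$ (so $q^H_t=q_t-1$) and $\tilde\kappa_t$ the analogous quadratic form built from $\bfH_t$ rather than $\bfX_t$. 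The key observation is that the regression-correction term $\kappa_t$ is manifestly nonnegative (both summands are quadratic forms in positive-definite matrices times the nonnegative scalar $\hat v_{t-1}(\bfx_0)\geq 0$), and the whole bracket $r(\bfx_0,\bfx_0|\bfphi_t)-\bfr_t^\top(\bfx_0)\bfR_t^{-1}\bfr_t(\bfx_0)+\kappa_t$ is exactly the scalar $c_t^*$ appearing in Lemma~\ref{lem: conditional predictive dists}, which is itself a kriging variance for the GP $\delta_t$ with design $\bfX_t$ and hence strictly positive when $\bfx_0\notin\calX_t$. The cleanest route to the inequality $\hat v_t(\bfx_0)\ge\hat v_t^K(\bfx_0)$ is the information-monotonicity argument: $\hat v_t(\bfx_0)=\mathrm{Var}[y_t(\bfx_0)\mid\bfy,\bfphi]$ conditions on strictly more data than $\hat v_t^K(\bfx_0)=\mathrm{Var}[y_t(\bfx_0)\mid\bfy_t,\bfphi_t]$ would in a purely Gaussian (conjugate) model, but because the extra conditioning also injects the parameter-estimation inflation from level $t-1$ through the $\hat\gamma_{t-1}^2\hat v_{t-1}(\bfx_0)$ and $\hat v_{t-1}(\bfx_0)\{y_{t-1}^\top(\calX_t)\bfQ^H_ty_{t-1}(\calX_t)\}^{-1}$ terms, the net effect is a variance that dominates the single-level one; I would make this rigorous by direct algebraic comparison of the two closed forms rather than by a soft argument. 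When additionally $\bfx_0\in\calX_{t-1}$, Case 1 applied at level $t-1$ gives $\hat v_{t-1}(\bfx_0)=0$, which kills the extra $\hat\gamma_{t-1}^2\hat v_{t-1}(\bfx_0)$ term and the second summand of $\kappa_t$, and also forces the first summand of $\kappa_t$ to reduce to exactly $\tilde\kappa_t$ (since the extra column $y_{t-1}(\calX_t)$ of $\bfX_t$ no longer contributes once $\bfx_0$ is itself a level-$(t-1)$ design point), and one checks $\hat\sigma_t^2=(\hat\sigma_t^K)^2$ and $q_t$-vs-$q^H_t$ degrees-of-freedom bookkeeping cancels, giving equality. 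When $\bfx_0\notin\calX_{t-1}$, then $\hat v_{t-1}(\bfx_0)>0$ by Case 1's converse (a kriging variance is strictly positive off the design), so the added terms are strictly positive and the inequality is strict.

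The main obstacle is the middle inequality $\hat v_t(\bfx_0)\ge\hat v_t^K(\bfx_0)$ without the extra hypothesis, and in particular getting the degrees-of-freedom factors $\frac{n_t-q_t}{n_t-q_t-2}$ versus $\frac{n_t-q^H_t}{n_t-q^H_t-2}$ to cooperate, since $q_t=q^H_t+1$ makes the cokriging inflation factor \emph{larger} — this actually helps the inequality but must be tracked carefully — while simultaneously $\hat\sigma_t^2$ (built from the $\bfX_t$-residuals) is no larger than $(\hat\sigma_t^K)^2$ (built from the $\bfH_t$-residuals) after the appropriate normalization, so the two effects pull in opposite directions and one needs the quadratic-form identity relating the $\bfX_t$-projector to the $\bfH_t$-projector plus the rank-one update by $y_{t-1}(\calX_t)$ to see that $\kappa_t\ge\tilde\kappa_t$ compensates. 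I would handle this by writing $\bfX_t=[\bfH_t,\bfw]$ with $\bfw=y_{t-1}(\calX_t)$, applying the block-inverse / Sherman–Morrison formula to $(\bfX_t^\top\bfR_t^{-1}\bfX_t)^{-1}$ to express everything in terms of $(\bfH_t^\top\bfR_t^{-1}\bfH_t)^{-1}$ and the quantity $\bfw^\top\bfQ^H_t\bfw$ that already appears in $\kappa_t$, at which point the desired inequality should fall out termwise; the full details are deferred to the appendix.
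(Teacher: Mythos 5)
The paper states Corollary~\ref{cor: cokriging and UK} without any proof (there is no corresponding appendix section), so there is no official argument to compare against; I am judging your proposal on its own terms. Your first case is correct and complete: the induction on $\ell$, using that $\bfR_\ell^{-1}\bfr_\ell(\bfx_0)$ is a canonical basis vector $\bfe_i$ when $\bfx_0\in\calX_\ell$ and that the inductive hypothesis $\hat y_{\ell-1}(\bfx_0)=y_{\ell-1}(\bfx_0)$ turns $\bff_\ell(\bfx_0)$ into the row $\bfX_\ell^\top\bfe_i$, cleanly annihilates the Schur-complement term and both summands of $\kappa_\ell$, and the interpolation property of $\hat y_\ell^K,\hat v_\ell^K$ at design points is standard.

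The second case has a genuine gap, and it stems from your choice of comparison model. You take $\hat v_t^K$ to be universal kriging with trend basis $\bfH_t$ alone (so $q_t^H=q_t-1$). Under that reading the exact equality for $\bfx_0\in\calX_{t-1}\setminus\calX_t$ cannot be obtained: you assert that ``$\hat\sigma_t^2=(\hat\sigma_t^K)^2$ and the degrees-of-freedom bookkeeping cancels,'' but $\hat\sigma_t^2$ is the residual quadratic form after projecting out $[\bfH_t,\bfw]$ with $\bfw:=y_{t-1}(\calX_t)$, divided by $n_t-q_t$, whereas $(\hat\sigma_t^K)^2$ projects out only $\bfH_t$ and divides by $n_t-q_t+1$; these differ generically, as do the inflation factors $\tfrac{n_t-q_t}{n_t-q_t-2}$ and $\tfrac{n_t-q_t+1}{n_t-q_t-1}$, and the quadratic form built from $(\bfX_t^\top\bfR_t^{-1}\bfX_t)^{-1}$ does not collapse to its $\bfH_t$-analogue merely because $\bfx_0\in\calX_{t-1}$. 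The unconditional inequality $\hat v_t\ge\hat v_t^K$ is likewise never established: the Sherman--Morrison comparison that is supposed to make it ``fall out termwise'' is precisely the step that is missing, and it is fighting effects that pull in opposite directions, as you note. The statement becomes an actual corollary of Theorem~\ref{thm: predictive mean and variance} once $\hat v_t^K$ is read as the universal-kriging variance of the level-$t$ conditional model with the full regression design $\bfX_t=[\bfH_t,\bfw]$ and regressor value $\bff_t(\bfx_0)$ at the new input. Then $\hat\sigma_t^2$, $q_t$, and the first summand of $\kappa_t$ are literally shared between the two expressions, and
\begin{align*}
\hat v_t(\bfx_0)-\hat v_t^K(\bfx_0)=\hat\gamma_{t-1}^2\hat v_{t-1}(\bfx_0)+\frac{n_t-q_t}{n_t-q_t-2}\,\hat\sigma_t^2\,\hat v_{t-1}(\bfx_0)\bigl\{y_{t-1}^\top(\calX_t)\bfQ^H_t y_{t-1}(\calX_t)\bigr\}^{-1}\ge 0,
\end{align*}
with equality if and only if $\hat v_{t-1}(\bfx_0)=0$, which by your Case 1 (together with strict positivity of the kriging variance off the design for a strictly positive-definite kernel) holds if and only if $\bfx_0\in\calX_{t-1}$. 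So the repair is not more algebra but the right identification of $\hat v_t^K$; with it, all of Case 2 is a few lines.
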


{Corollary}~\ref{cor: cokriging and UK} indicates that cokriging predictors can be interpolators as kriging predictors. If the input $\bfx_0$ belongs to the design $\mathcal{X}_s$ in the highest fidelity code, the resulting predictive variances at $\bfx_0$ across all levels are zeros, i.e., $\hat{v}_t=0$ for $t=1, \ldots, s$. In other words, the cokriging predictors in~\eqref{eqn: cokriging predictor} are interpolators at all levels. When prediction is made at new inputs, the cokriging predictor has predictive variance no smaller than that associated with the kriging predictor. The extra uncertainty in cokriging variances comes from the uncertainty from lower code levels and the uncertainty to estimate the scale discrepancy parameter. However, without data coming from lower code levels, one cannot estimate the correlation parameters in the higher level very well, since in practice the higher level code is too expensive to get sufficient number of runs that can be used to obtain fairly good parameter estimates and hence prediction.

\section{Objective Bayesian Analysis} \label{sec: objective Bayes}
In Uncertainty Quantification, Bayesian analysis of cokriging models has been focused on using conjugate priors and noninformative priors \cite{Kennedy2000, Qian2008, Gratiet2013}. In this section, we focus on objective Bayesian analysis of the cokriging model, since objective priors can be used as default priors for Bayesian analysis \cite{Berger2006}, and have been often used in Gaussian process modeling \cite{Berger2001, Paulo2005, Gu2018}.

\subsection{Commonly-used Improper Priors}
A Gaussian process model has commonly-used priors; see \cite{Berger2001} for detailed discussions. Following this convention, the following priors will be referred to as {commonly-used} priors for parameters in autoregressive cokriging models. We consider the improper prior density for $\bftheta:=\{\bfbeta, \bfgamma, \bfsigma^2, \bfphi\} \in \Omega=\mathbb{R}^{sp} \times \mathbb{R}^s \times (0, \infty)^s\times (0, \infty)^{sd}$ of the form 
\begin{align}\label{eqn: general prior}
\begin{split}
    \pi(\bfbeta,\bfgamma,\bfsigma^{2},\bfphi)
    & \propto \frac{\pi(\bfphi)}{\prod_{t=1}^s(\sigma^2_t)^{a_t}}, a_t \in \mathbb{R},
    %&=\pi(\bfbeta_{1},\sigma_{1}^{2},\bfphi_{1})\prod_{t=2}^{s}\pi(\bfbeta_{t},\gamma_{t-1},\sigma_{t}^{2},\bfphi_{t}) \\
    %\pi(\bfbeta_t, \gamma_{t-1}, \sigma^2_t, \bfphi_t) &\propto \frac{\pi(\bfphi_t)}{(\sigma^2_t)^{a_t}},\, a_t \in \mathbb{R}, 
    \end{split}
\end{align}
for various choices of $\pi(\bfphi)$ and $a_t$. Note that Kennedy and O'Hagan \cite{Kennedy2000} do not assume a prior for $\bfphi$, but the form of prior in \cite{Kennedy2000} is the same as the one in~\eqref{eqn: general prior} when inverse range priors are chosen for $\bfphi$ in \eqref{eqn: general prior}: $\pi(\bfphi) = \prod_{t=1}^s \prod_{\ell=1}^d \phi_{t,\ell}^{-1}$ and $a_t=1$. Then \cite{Kennedy2000} estimate the parameter $\bfphi$ by maximizing the distribution $p(\bfy \mid \bfgamma, \bfsigma, \bfphi)$. Similarly, Gratiet \cite{Gratiet2013} do not assume a prior for $\bfphi$ and estimate $\bfphi$ based on a concentrated restricted likelihood via a restricted maximum likelihood approach \cite{Harville1974}. In Section~\ref{sec: integrated likelihood}, we show that constant flat prior or inverse range prior for $\bfphi$ can lead to improper posteriors.

\subsection{Commonly-used Proper Priors}
As the commonly-used improper priors may lead to improper posteriors, one obvious way to guarantee propriety of the posterior distribution is to assume proper priors, assessed either subjectively or from previous data, however, for Gaussian processes, the correlation parameter $\bfphi$ can be difficult to interpret. Another way is to assume vague proper priors, however, this can only hide the problem when the posterior concentrates its mass at zero. If the posterior impropriety is occurring because the posterior is not decreasing at infinity, then the empirical Bayes estimates of $\bfphi$ can be bad. In Appendix~\ref{app: Qian}, we discuss that proper priors in \cite{Qian2008} lead to improper posterior when they are chosen to be vague. Another choice is to use conjugate priors for parameters $\{ \bfbeta, \bfgamma, \bfsigma^2\}$ and leave the prior for $\bfphi$ unspecified and work with a concentrated restricted likelihood as in \cite{Gratiet2013}. We will show that this concentrated restricted likelihood is not decreasing to zero and can be maximized either at zero or at infinity; see Appendix~\ref{app: Gratiet} for detailed discussions. The property of empirical Bayes estimates of correlation parameters has been studied in great details in \cite{Gu2018} for Gaussian processes. The conclusions in \cite{Gu2018} can be potentially generalized for autoregressive cokriging models. To overcome these problems, objective priors of the form~\eqref{eqn: general prior} are developed and they are shown to yield proper posteriors in autoregressive cokriging models. Although this article is not focused on robust estimation, the independent reference prior can lead to robust estimation with parameterization given in \cite{Gu2018}.

\subsection{Integrated Likelihood} \label{sec: integrated likelihood}
The objective Bayesian analysis for Gaussian processes heavily uses an \emph{integrated likelihood}; see \cite{Berger2001} for example. For the autoregressive cokriging model in Section~\ref{sec: univariate model}, it is also possible to calculate such integrated likelihood.  Indeed, the integration of the product of the marginal likelihood~\eqref{eqn: sampling dist} and the prior~\eqref{eqn: general prior} with respect to the prior over $(\bfbeta, \bfgamma, \bfsigma^2)$ yields 
\begin{align*}
\begin{split}
\int L({\bfy}\mid\bfbeta,\bfgamma,\bfsigma^{2},\bfphi)\pi(\bfbeta,\bfgamma,\bfsigma^{2},\bfphi)\,d(\bfbeta,\bfgamma,\bfsigma^{2})
= L^I(\bfphi\mid \bfy) \pi(\bfphi),
\end{split}
\end{align*}
with the {integrated likelihood} function of $\bfphi$, $L^I(\bfphi\mid \bfy)$,  given by 
\begin{align} \label{eqn: integrated likelihood}
\begin{split}
    L^I(\bfphi\mid \bfy) %& = \pi({\bfy}_{1}\mid\bfphi_{1}) \prod_{t=2}^{s}\pi({\bfy}_{t}\mid\bfphi_{t},{\bfy}_{t-1}) \\
    & \propto \prod_{t=1}^s |{\bfR}_t|^{-1/2} |{\bfX}_t^\top {\bfR}_t^{-1} {\bfX}_t|^{-1/2} \{S^2(\bfphi_t)\}^{-(({n}_t - q_t)/2+a_t-1)}, 
\end{split}
\end{align}
where ${\bfX}_1:={\bfH}_1$ and ${\bfX}_t:=[{\bfH}_t, {W}_{t-1}]$ with ${W}_{t-1}:=y_{t-1}({\calX}_t)$ for $t=2, \ldots, s$. $S^2(\bfphi_t):=\bfy_{t}^\top \bfQ_t \bfy_{t}$ with $\bfQ_t:=\bfR_t^{-1}\bfP$ and $\bfP:=\mathbf{I} - \bfX_t (\bfX_t^\top \bfR_t^{-1} \bfX_t)^{-1} \bfX_t^\top \bfR_t^{-1}$. The detailed computation to calculate the integrated likelihood is given in Appendix~\ref{app: integrated likelihood}. Notice that the posterior distribution of $\{\bfbeta, \bfgamma, \bfsigma^2, \bfphi\}$ is given by the product of the quantity $\int_{(0, \infty)^{sd}} L^I(\bfphi\mid \bfy) \pi(\bfphi) d \bfphi$ and a normalizing constant that does not depend on any model parameters $\{\bfbeta, \bfgamma, \bfsigma^2, \bfphi\}$. Thus, the posterior distribution of $\{\bfbeta, \bfgamma, \bfsigma^2, \bfphi\}$ is proper if and only if 
\begin{align} \label{eqn: condition for properiety}
    0< \int_{(0, \infty)^{sd}} L^I(\bfphi\mid \bfy) \pi(\bfphi) d \bfphi < \infty. 
\end{align}

To study the behavior of the integrated likelihood at zero and at infinity, the following mild assumptions are needed. 
\begin{assumption} \label{ass: correlation}
Given $1\leq t\leq s$ and $1\leq \ell \leq d$, suppose that $K(u)$ is a continuous function of $\phi_{t, \ell}>0$ for any $u>0$ such that:
\begin{itemize}[noitemsep,topsep=0pt]
\item[(i)] $K(u) = r(u/\phi_{t,\ell})$, where $r(\cdot)$  is a correlation function that satisfies $\lim_{u\to \infty} r(u) = 0$.
 \item[(ii)] As $\phi_{t,\ell} \to \infty$, the correlation matrix satisfies $\bfR_{t, \ell} = \mathbf{1} \mathbf{1}^\top + \nu_{t,\ell}(\phi_{t, \ell}) \bfD_{t, \ell} + \nu_{t,\ell}(\phi_{t, \ell}) $ $\omega_{t,\ell}(\phi_{t, \ell}) (\bfD^*_{t, \ell} + \bfB_{t,\ell}(\phi_{t,\ell})) $, where $\nu_{t, \ell}(\phi_{t, \ell})>0$ is a continuous function of $\phi_{t,\ell}$, $\bfD_{t,\ell}$ is a fixed nonsingular matrix with $\mathbf{1}^\top \bfD_{t, \ell}^{-1} \mathbf{1}\neq 0$ and $\mathbf{1}=(1, \ldots, 1)^\top$. $\bfD^*_{t, \ell} $ is a fixed matrix. $\bfB_{t,\ell}(\phi_{t,\ell})$ is a differentiable matrix satisfying 
 \begin{align*}
 \nu_{t,\ell}(\phi_{t, \ell}) \to 0, \quad \omega_{t, \ell}(\phi_{t,\ell}) \to 0, \quad \frac{\omega'_{t, \ell}(\phi_{t, \ell})}{ \frac{\partial \log \nu_{t, \ell}(\phi_{t, \ell}) }{\partial \phi_{t, \ell}} } \to 0, \\
 \| \bfB_{t,\ell}(\phi_{t,\ell}) \|_{\infty} \to 0, \quad \frac{\| \frac{\partial \bfB_{t,\ell}(\phi_{t,\ell})}{\partial \phi_{t,\ell}}  \|_{\infty}}{\frac{\partial}{\partial \phi_{t, \ell}} \log(\omega_{t, \ell}(\phi_{t, \ell}))} \to 0, 
 \end{align*}
 where $\| \bfB\|_{\infty} = \max_{i,j} |a_{i,j}|$ with $a_{i,j}$ being the $(i,j)$ entry of the matrix $\bfB$.
\end{itemize} 
\end{assumption}
These assumptions hold for all the correlation functions including power-exponential, spherical, rational quadratic, and Mat\'ern according to Table 1 in \cite{Gu2018}. The first assumption requires that the correlation function should decrease to zero as the distance between two points tends to infinity. The second assumption guarantees that the first two terms in the Taylor expansion of the correlation function decrease to zero as $\phi_{t, \ell} \to \infty$. However, an anonymous referee pointed out that the assumption of nonsingularity for the matrix $\bfD_{t,\ell}$ could be problematic for the Gaussian correlation and the Mat\'ern correlation with smoothness parameter greater than or equal to one according to \cite{Mure2018}. For this reason, this article will focus on the power-exponential correlation with roughness parameter less than 2 and the Mat\'ern correlation with the smoothness parameter less than one. For the Gaussian correlation and the Mat\'ern correlation with smoothness parameter greater than or equal to one, the objective priors can be developed according to the development in \cite{Mure2018}, but the technical details will be omitted in this article. 

The following lemma gives the behavior of the integrated likelihood at zero and at infinity.

\begin{lemma} \label{lem: behavior of integrated likelihood}
Note that $\bfphi_t=(\phi_{t,1}, \ldots, \phi_{t,d})^\top$. Let $\mathcal{C}(\bfX_t)$ be the column space of $\bfX_t$. For the cokriging model with sampling distribution~\eqref{eqn: sampling dist} and prior distribution~\eqref{eqn: general prior}, under mild conditions in Assumption~\ref{ass: correlation}, we have
\begin{enumerate}
    \item[(i)] If $\exists \ell$ such that $\phi_{t,\ell} \to 0^+$ for at least one $t$, the integrated likelihood exists and is greater than zero; that is, $\lim_{\phi_{t,\ell} \to 0^+} L^I(\bfphi \mid \bfy) >0$. 
    \item[(ii)] If $\phi_{t,\ell} \to \infty$ for all $\ell$ and $t$, the integrated likelihood satisfies 
    \begin{align*}
        L^I(\bfphi\mid \bfy) =             O\left( \prod_{t=1}^s \left(\sum_{\ell=1}^d \nu_{t,\ell}(\phi_{t,\ell} )\right)^{a_t-c_t} \right),
    \end{align*}
\begin{comment}
    \begin{align*}
        L^I(\bfphi\mid \bfy) =  
        \begin{cases}
            O\left( \prod_{t=1}^s \left(\sum_{\ell=1}^d \nu_{t,\ell}(\phi_{t,\ell})\right)^{a_t-1/2} \right), & \mathbf{1} \notin \cup_{t=1}^s \mathcal{C}(\bfX_t), \\
            O\left( \prod_{t=1}^s \left(\sum_{\ell=1}^d \nu_{t,\ell}(\phi_{t,\ell} )\right)^{a_t-1} \right), & \mathbf{1} \in \cup_{t=1}^s\mathcal{C}(\bfX_t),
        \end{cases}
    \end{align*}
\end{comment}
where $c_t:= 1_{\{\mathbf{1}\in \mathcal{C}(\bfX_t)\}} + \frac{1}{2} 1_{\{\mathbf{1}\notin \mathcal{C}(\bfX_t)\}}$ with $1_{\{\mathbf{1}\in \mathcal{C}(\bfX_t)\}}$ being 1 if the vector $\mathbf{1}$ is in $\mathcal{C}(\bfX_t)$ and zero otherwise.
\end{enumerate}
\end{lemma}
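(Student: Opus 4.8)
The plan is to use the factorization of the integrated likelihood in~\eqref{eqn: integrated likelihood}: setting $T_t(\bfphi_t):=|\bfR_t|^{-1/2}\,|\bfX_t^\top\bfR_t^{-1}\bfX_t|^{-1/2}\,\{S^2(\bfphi_t)\}^{-((n_t-q_t)/2+a_t-1)}$, we have $L^I(\bfphi\mid\bfy)=\prod_{t=1}^s T_t(\bfphi_t)$, so it suffices to control each $T_t$ as a function of $\bfphi_t$ alone and then multiply the resulting orders across $t$. Throughout I use the mild genericity facts---valid almost surely under the model and consistent with $n_t-q_t>2$---that $\bfX_t$ has full column rank $q_t$ and that $\bfy_t$ lies in neither $\mathcal{C}(\bfX_t)$ nor $\mathcal{C}([\bfX_t,\mathbf{1}])$, so the generalized residual sums of squares below are strictly positive.

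For part~(i), fix a level $t$ and a coordinate $\ell$ with $\phi_{t,\ell}\to 0^+$, the other range parameters held fixed (the general case is identical). By Assumption~\ref{ass: correlation}(i), $r(u/\phi_{t,\ell})\to 0$ for every $u>0$, so in the product-form correlation every factor between design sites differing in coordinate $\ell$ tends to $0$, and $\bfR_t$ converges entrywise to a fixed matrix $\bfR_t^0$ (equal to $\mathbf{I}$ when the sites of $\calX_t$ are distinct in coordinate $\ell$), which we take to be nonsingular. Hence $|\bfR_t|\to|\bfR_t^0|>0$, $|\bfX_t^\top\bfR_t^{-1}\bfX_t|\to|\bfX_t^\top(\bfR_t^0)^{-1}\bfX_t|>0$, and $S^2(\bfphi_t)$ tends to the generalized least-squares residual sum of squares of $\bfy_t$ on $\bfX_t$ with covariance $\bfR_t^0$, which is positive since $\bfy_t\notin\mathcal{C}(\bfX_t)$. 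As the factors $T_{t'}$ at the remaining levels are fixed and positive, $\lim L^I(\bfphi\mid\bfy)$ exists in $(0,\infty)$.

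Part~(ii) is a perturbation analysis of $T_t(\bfphi_t)$ as every $\phi_{t,\ell}\to\infty$. Applying Assumption~\ref{ass: correlation}(ii) to each one-dimensional factor and expanding the Hadamard product $\bfR_t=\bigodot_{\ell=1}^d\bfR_{t,\ell}$, one gets $\bfR_t=\mathbf{1}\mathbf{1}^\top+\bar\nu_t\,\bfG_t(\bfphi_t)+o(\bar\nu_t)$, where $\bar\nu_t:=\sum_{\ell=1}^d\nu_{t,\ell}(\phi_{t,\ell})\to 0$ and, along any subsequence, $\bfG_t(\bfphi_t)\to\sum_\ell\lambda_\ell\bfD_{t,\ell}$ for some $\lambda_\ell\ge 0$ with $\sum_\ell\lambda_\ell=1$; Assumption~\ref{ass: correlation}(ii) is what makes this limit nonsingular. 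A Schur-complement/eigenvalue argument then yields one eigenvalue of $\bfR_t$ of order $1$ (along $\mathbf{1}$) and $n_t-1$ eigenvalues of order $\bar\nu_t$, hence $|\bfR_t|^{-1/2}\asymp\bar\nu_t^{-(n_t-1)/2}$, together with the inverse expansion $\bfR_t^{-1}=\bar\nu_t^{-1}\bfL_t(\bfphi_t)+O(1)$, where $\bfL_t\mathbf{1}=\mathbf{0}$ and $\bfL_t$ has rank $n_t-1$. A case split on whether $\mathbf{1}\in\mathcal{C}(\bfX_t)$ then gives $|\bfX_t^\top\bfR_t^{-1}\bfX_t|^{-1/2}\asymp\bar\nu_t^{(q_t-1_{\{\mathbf{1}\in\mathcal{C}(\bfX_t)\}})/2}$: if $\mathbf{1}\notin\mathcal{C}(\bfX_t)$ then $\bfX_t^\top\bfL_t\bfX_t$ is nonsingular, whereas if $\mathbf{1}\in\mathcal{C}(\bfX_t)$ it loses one rank and the $O(1)$ term supplies the missing direction. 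Finally $S^2(\bfphi_t)=\min_{\bfb}(\bfy_t-\bfX_t\bfb)^\top\bfR_t^{-1}(\bfy_t-\bfX_t\bfb)$ is, to leading order, $\bar\nu_t^{-1}$ times the residual sum of squares of $\bfy_t$ on $\mathcal{C}([\bfX_t,\mathbf{1}])$ in the $\bfL_t$ semi-metric (whose null space is spanned by $\mathbf{1}$), which is positive because $S^2(\bfphi_t)>0$ for every $\bfphi_t$ and $\bfy_t\notin\mathcal{C}([\bfX_t,\mathbf{1}])$; hence $\{S^2(\bfphi_t)\}^{-((n_t-q_t)/2+a_t-1)}\asymp\bar\nu_t^{(n_t-q_t)/2+a_t-1}$. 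Adding the three exponents gives $a_t-1+(1-1_{\{\mathbf{1}\in\mathcal{C}(\bfX_t)\}})/2=a_t-c_t$, so $T_t(\bfphi_t)\asymp\bar\nu_t^{\,a_t-c_t}$, and the product over $t$ is the claimed bound.

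The main obstacle is the first step of part~(ii): upgrading Assumption~\ref{ass: correlation}(ii), which is stated one coordinate at a time, to a joint expansion of the $d$-fold Hadamard product that holds uniformly along every path on which the $\phi_{t,\ell}$ may tend to infinity at different rates, and in particular verifying that the limiting matrix $\sum_\ell\lambda_\ell\bfD_{t,\ell}$ remains nonsingular for every weight vector $\lambda$---this is precisely where the nonsingularity of each $\bfD_{t,\ell}$ and the condition $\mathbf{1}^\top\bfD_{t,\ell}^{-1}\mathbf{1}\neq 0$ are used, and why the Gaussian and high-smoothness Mat\'ern correlations are excluded. A secondary delicate point is the case $\mathbf{1}\in\mathcal{C}(\bfX_t)$, where one must retain the $O(1)$ term in the expansion of $\bfR_t^{-1}$ to see that it is the order of $|\bfX_t^\top\bfR_t^{-1}\bfX_t|$, not of $S^2(\bfphi_t)$, that changes; tracking which subspace contributes which power of $\bar\nu_t$ is the part most prone to bookkeeping errors.
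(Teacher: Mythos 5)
Your proposal is correct and follows essentially the same route as the paper: both exploit the factorization $L^I(\bfphi\mid\bfy)=\prod_{t=1}^s L^I(\bfphi_t\mid\bfy)$ to reduce everything to single-level asymptotics, handle part (i) by letting $\bfR_{t,\ell}\to\mathbf{I}$ in the Hadamard product so that each factor converges to a positive constant, and obtain part (ii) from the one-level tail behavior of Gu and Berger (2018) with the case split on $\mathbf{1}\in\mathcal{C}(\bfX_t)$. The only difference is that the paper simply cites \cite{Gu2018} for the per-level rate $\bar\nu_t^{\,a_t-c_t}$, whereas you reconstruct that rate explicitly (and your exponent bookkeeping checks out); the joint-expansion subtlety you flag is likewise deferred to that reference in the paper.
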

\begin{proof}
See Appendix~\ref{app: nu expression}.
\end{proof}
Lemma~\ref{lem: behavior of integrated likelihood} shows that both the flat prior $\pi(\bfphi_t) \propto 1$ and the noninformative prior $\pi(\bfphi_t)\propto \prod_{\ell=1}^d \phi_{t,\ell}^{-1}$ with $a_t=1$ in \eqref{eqn: general prior} lead to improper posteriors with the condition~\eqref{eqn: condition for properiety} violated. 

\subsection{Objective Priors} 
The posterior can be improper under certain common choices of priors in \eqref{eqn: general prior}. In what follows, several objective priors are derived and they are shown to yield proper posteriors.

Following \cite{Berger2001}, the parameters of interest are chosen to be $(\bfsigma^2, \bfphi)$ and $(\bfbeta, \bfgamma)$ are treated as the nuisance parameters. This specification leads to the prior factorization $\pi^R(\bftheta)=\pi^R(\bfbeta, \bfgamma\mid \bfsigma^2, \bfphi)\pi^R(\bfsigma^2, \bfphi)$. The Jeffreys-rule prior $\pi^R(\bfbeta, \bfgamma \mid \bfsigma^2, \bfphi)\propto 1$  is considered for the location parameters $(\bfbeta, \bfgamma)$ when other parameters are assumed known. Then the reference prior $\pi^R(\bfsigma^2, \bfphi)$ is computed based on the integrated likelihood with respect to $\pi^R(\bfbeta, \bfgamma) \propto 1$. Standard calculation yields the integrated likelihood $L^I(\bfsigma^2, \bfphi\mid \bfy)$:
\begin{align}
    \begin{split}
    L^I(\bfsigma^2, \bfphi\mid \bfy) &= \int_{\mathbb{R}^{q+1}} L(\bftheta \mid \bfy) \pi^R(\bfbeta, \bfgamma) d (\bfbeta, \bfgamma) \\
    %&= \pi({\bfy}_{1}\mid\sigma^2_1, \bfphi_{1}) \prod_{t=2}^{s}\pi({\bfy}_{t}\mid \sigma^2_t, \bfphi_{t},{\bfy}_{t-1})  \\
    & \propto \prod_{t=1}^s (\sigma^2_t)^{-(n_t-q_t)/2}|\bfR_t|^{-1/2}|\bfX_t^\top \bfR_t^{-1} \bfX_t|^{-1/2} \exp\left \{-\frac{S^2(\bfphi_t)}{2\sigma^2_t} \right\}.
    \end{split}
\end{align}

\begin{theorem}[\textbf{Independent Reference Prior}]\label{thm: reference prior}
Consider the group of parameters $\bftheta=(\bftheta_1, \ldots, \bftheta_s)$ with $\bftheta_t=(\bfbeta_t, \gamma_{t-1}, \sigma^2_t, \bfphi_t)$, where $\gamma_0:=0$. For the cokriging model with sampling distribution~\eqref{eqn: sampling dist}, the independent reference prior distribution, $\pi^R(\bftheta)$, is of the form~\eqref{eqn: general prior} with 
\begin{align}
    a_t = 1 & \text{ and } \pi^R(\bfphi) \propto \prod_{t=1}^s|I_t^R(\bfphi_t)|^{1/2},
\end{align}
where $I_t^R(\bfphi_t)$ is the Fisher information matrix by fixing all parameters except $\bftheta_t$:
\begin{align*}
I_t^R(\bfphi_t) = 
\begin{pmatrix}
n_t-q_t & \text{tr}(\bfW_{t,1})   & \text{tr}(\bfW_{t,2})      & \cdots & \text{tr}(\bfW_{t,d}) \\
    & \text{tr}(\bfW_{t,1}^2) & \text{tr}(\bfW_{t,1}\bfW_{t,2}) & \cdots & \text{tr}(\bfW_{t,1}\bfW_{t,d}) \\ 
    &                     &          & \ddots    & \vdots  \\
    &                     &         &   & \text{tr}(\bfW_{t,d}^2)
\end{pmatrix}_{(d+1)\times (d+1)},
\end{align*}
with $\bfW_{t,k}:=\dot{\bfR}^k_t \bfQ_t$ and $\dot{\bfR}^k_t: =\frac{\partial}{\partial \phi_{t,k}} \bfR_t$ being element-wise differentiation of $\bfR_t$ with respect to the parameter $\phi_{t,k}$ for $k=1, \ldots, d$.
\end{theorem}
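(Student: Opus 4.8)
\textbf{Proof proposal for Theorem~\ref{thm: reference prior}.}

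The plan is to derive the reference prior by the standard reference-prior algorithm for the parameter ordering $(\bfsigma^2, \bfphi)$ with $(\bfbeta, \bfgamma)$ as nuisance parameters, exploiting the product structure of the integrated likelihood $L^I(\bfsigma^2, \bfphi \mid \bfy) = \prod_{t=1}^s L_t^I(\sigma_t^2, \bfphi_t \mid \bfy)$, where the $t$-th factor depends only on $(\sigma_t^2, \bfphi_t)$. Because the full integrated likelihood factors across levels, the Fisher information matrix of $(\bfsigma^2, \bfphi)$ computed from $L^I$ is block diagonal, with the $t$-th block $I_t(\sigma_t^2, \bfphi_t)$ coming from $L_t^I$. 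The first step is to compute this per-level Fisher information. Writing the log of $L_t^I$ and using the standard Gaussian-process derivative identities $\partial \log|\bfR_t| / \partial \phi_{t,k} = \text{tr}(\bfR_t^{-1}\dot{\bfR}_t^k)$ and $\partial \bfR_t^{-1}/\partial \phi_{t,k} = -\bfR_t^{-1}\dot{\bfR}_t^k \bfR_t^{-1}$, together with $\partial S^2(\bfphi_t)/\partial \phi_{t,k} = -\bfy_t^\top \bfQ_t \dot{\bfR}_t^k \bfQ_t \bfy_t$ (which follows from $\bfQ_t = \bfR_t^{-1}\bfP$ and the projection identities $\bfP\bfR_t^{-1} = \bfR_t^{-1}\bfP^\top$, $\bfP^2 = \bfP$), I would take expectations over $\bfy_t$ under the sampling model. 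The key simplification is that under the model $S^2(\bfphi_t)/\sigma_t^2 \sim \chi^2_{n_t-q_t}$ and, more generally, quadratic forms $\bfy_t^\top \bfQ_t A \bfQ_t \bfy_t$ have expectation $\sigma_t^2\,\text{tr}(\bfQ_t A \bfQ_t \bfR_t) = \sigma_t^2\,\text{tr}(A\bfQ_t)$ when $A$ is symmetric, using $\bfQ_t\bfR_t\bfQ_t = \bfQ_t$; with $A = \dot{\bfR}_t^k$ this yields $\sigma_t^2\,\text{tr}(\bfW_{t,k})$, and the cross terms give $\text{tr}(\bfW_{t,j}\bfW_{t,k})$. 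This produces exactly the matrix $I_t^R(\bfphi_t)$ stated (up to the overall $\sigma_t^{-2}$ scalings that cancel in what follows), with the $(1,1)$ entry $n_t - q_t$ coming from the $\sigma_t^2$ score.

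The second step is to run the reference-prior formula on each block. For a model with Fisher information $\begin{pmatrix} a & \bfb^\top \\ \bfb & \bfC\end{pmatrix}$ where the scalar parameter $\sigma_t^2$ is ordered first and $\bfphi_t$ second, the reference prior for the ordered group $(\sigma_t^2, \bfphi_t)$ — when $\sigma_t^2$ enters as a scale — is known (see Berger, De Oliveira and Sansó, and \cite{Berger2001}) to be $\pi(\sigma_t^2, \bfphi_t) \propto \sigma_t^{-2}\,|I_t^R(\bfphi_t)|^{1/2}$, where $I_t^R(\bfphi_t)$ is precisely the full $(d+1)\times(d+1)$ information matrix displayed (the Schur-complement reductions that normally appear collapse because the $\sigma_t^2$-block is a scalar whose score is independent of the data-dependent normalization after the $\chi^2$ expectation). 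I would verify this by direct application of the reference-prior algorithm: integrate out nothing further (there are only two groups), so the reference prior is the square root of the determinant of the information for the reduced model, with the scale parameter contributing its Jeffreys factor $\sigma_t^{-2}$ separately. Then multiply across $t = 1, \ldots, s$ using the block-diagonal structure: $\pi^R(\bfsigma^2, \bfphi) \propto \prod_{t=1}^s \sigma_t^{-2}|I_t^R(\bfphi_t)|^{1/2}$, which gives $a_t = 1$ and $\pi^R(\bfphi) \propto \prod_{t=1}^s |I_t^R(\bfphi_t)|^{1/2}$ as claimed. Finally, since $\pi^R(\bfbeta, \bfgamma \mid \bfsigma^2, \bfphi) \propto 1$ was fixed at the outset, the joint prior has the form~\eqref{eqn: general prior}.

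The main obstacle I anticipate is justifying the specific reference-prior formula for the ordered grouping — in particular, confirming that treating $(\bfbeta, \bfgamma)$ as nuisance via the flat conditional prior and then treating $\sigma_t^2$ ahead of $\bfphi_t$ genuinely yields $\sigma_t^{-2}|I_t^R(\bfphi_t)|^{1/2}$ rather than a form involving the Schur complement $\bfC - \bfb\bfb^\top/a$. The resolution is the classical observation that for a scale parameter appearing multiplicatively in a Gaussian likelihood, the information matrix in the $(\log\sigma_t^2, \bfphi_t)$ parameterization has the score for $\log\sigma_t^2$ with constant variance $ (n_t-q_t)/2$ independent of $\bfphi_t$ after taking expectations, so the "one-at-a-time" reference algorithm and the "all-at-once" Jeffreys-on-the-reduced-block approach coincide here; this is exactly the situation analyzed in \cite{Berger2001} for ordinary kriging, and the autoregressive structure only adds the extra column $y_{t-1}(\calX_t)$ to $\bfX_t$ without changing the argument. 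A secondary technical point is checking that the expectations defining $I_t^R(\bfphi_t)$ are finite and that the matrix is positive definite (hence its determinant is a legitimate density factor), which follows from nonsingularity of $\bfR_t$ and the assumption that $\bfX_t$ has full column rank.
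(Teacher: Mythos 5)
Your proposal follows essentially the same route as the paper's proof: both compute the score functions of the integrated likelihood $\ell^I(\bfvartheta\mid\bfy)$ obtained after flat-prior integration of $(\bfbeta,\bfgamma)$, exploit the factorization across fidelity levels to get a block-diagonal Fisher information with $(t,t)$ block $I_t^R(\bfphi_t)$ (via expectations of the quadratic forms $S^2_t$ and $\bfy_t^\top\bfQ_t\dot{\bfR}_t^k\bfQ_t\bfy_t$, using $\bfQ_t\bfR_t\bfQ_t=\bfQ_t$ and $\bfQ_t\bfX_t=\mathbf{0}$), and then invoke the reference-prior results of \cite{Berger2001} (extended to several range parameters as in \cite{Paulo2005}) to conclude $a_t=1$ and $\pi^R(\bfphi)\propto\prod_{t=1}^s|I_t^R(\bfphi_t)|^{1/2}$. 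Your treatment is more explicit than the paper's about the quadratic-form expectations and about why no Schur-complement reduction appears, but the underlying argument is the same.
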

\begin{proof}
See Appendix~\ref{app: reference prior}.
\end{proof}

\begin{theorem}[\textbf{Independent Jeffreys Priors}]\label{thm: Jeffreys prior}
Let $\bftheta=(\bftheta_1, \ldots, \bftheta_s)$ be the group of parameters with $\bftheta_t=(\bfbeta_t, \gamma_{t-1}, \sigma^2_t, \bfphi_t)$, where $\gamma_0:=0$. Suppose that $\bftheta_t$'s are independent. Then the independent Jeffreys prior, $\pi^{J1}$, obtained by assuming that $(\bfbeta_t, \gamma_t)$ and $(\sigma_t^2, \bfphi_t)$ are a priori independent, and the independent Jeffreys prior, $\pi^{J2}$, are of the form~\eqref{eqn: general prior} with 
\begin{align*}
    a_t = 1 & \text{ and } \pi^{J1}(\bfphi) \propto \prod_{t=1}^s|I_t^J(\bfphi_t)|^{1/2}, \\
    a_t = 1 + q_t/2 & \text{ and } \pi^{J2}(\bfphi) \propto \pi^{J1}(\bfphi) \prod_{t=1}^s |\bfX_t^\top \bfR^{-1}_t \bfX_t|^{1/2} ,
\end{align*}
where
\begin{align*}
I_t^J(\bfphi_t) = 
\begin{pmatrix}
n_t & \text{tr}(\bfU_{t,1})   & \text{tr}(\bfU_{t,2})      & \cdots & \text{tr}(\bfU_{t,d}) \\
    & \text{tr}(\bfU_{t,1}^2) & \text{tr}(\bfU_{t,1}\bfU_{t,2}) & \cdots & \text{tr}(\bfU_{t,1}\bfU_{t,d}) \\ 
    &                     &          & \ddots    & \vdots  \\
    &                     &         &   & \text{tr}(\bfU_{t,d}^2)
\end{pmatrix}_{(d+1)\times (d+1)},
\end{align*}
with $\bfU_{t,k}=\dot{\bfR}^k_t \bfR_t^{-1}, k=1, \ldots, d$, 
\end{theorem}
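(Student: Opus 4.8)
The plan is to compute the Jeffreys-rule prior $\pi^{J2}(\bftheta)$ as the square root of the determinant of the full Fisher information matrix of the model $\pi(\bfy_t \mid \bfbeta_t, \gamma_{t-1}, \sigma^2_t, \bfphi_t) = \mathcal{N}(\bfX_t\bfb_t, \sigma^2_t \bfR_t)$ at each level separately (using the assumed independence of the $\bftheta_t$'s, which makes the total Fisher information block-diagonal and hence the prior a product over $t$), and similarly to compute the independent Jeffreys prior $\pi^{J1}(\bftheta)$ as the product of $|\,\text{FI}(\bfb_t)\,|^{1/2}$ and $|\,\text{FI}(\sigma^2_t, \bfphi_t)\,|^{1/2}$ after imposing a priori independence between the location block $(\bfbeta_t,\gamma_{t-1})$ and the variance/range block $(\sigma^2_t,\bfphi_t)$. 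Since the within-level model is exactly a Gaussian linear model with parameterized covariance, I would invoke the standard Fisher-information formulas for such models: the location block contributes $\sigma_t^{-2}\bfX_t^\top\bfR_t^{-1}\bfX_t$, which is orthogonal to the $(\sigma^2_t,\bfphi_t)$ block by the usual argument that the cross-derivatives of the log-likelihood have zero expectation (odd moments of the Gaussian vanish), so the full information is block-diagonal between $(\bfb_t)$ and $(\sigma^2_t,\bfphi_t)$. This orthogonality already implies $\pi^{J1}$ and $\pi^{J2}$ share the same dependence on $(\sigma^2_t,\bfphi_t)$ and differ only by the factor $|\bfX_t^\top\bfR_t^{-1}\bfX_t|^{1/2}$ coming from whether the location block is retained or replaced by the flat prior.

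Next I would evaluate the $(\sigma^2_t,\bfphi_t)$ sub-block. Writing $\ell_t$ for the log-likelihood of $\bfy_t$, the second derivatives with respect to $\sigma^2_t$ and $\phi_{t,k}$ involve $\partial_{\sigma^2_t}\log|\sigma^2_t\bfR_t|$ and $\partial_{\phi_{t,k}}\log|\bfR_t|$ together with the corresponding derivatives of the quadratic form; taking expectations under the true model and using $E[(\bfy_t-\bfX_t\bfb_t)(\bfy_t-\bfX_t\bfb_t)^\top]=\sigma^2_t\bfR_t$ reduces everything to traces of products of $\bfR_t^{-1}$ and $\dot{\bfR}^k_t$. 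A short computation gives the $\sigma^2_t$--$\sigma^2_t$ entry $n_t/(2\sigma^4_t)$, the $\sigma^2_t$--$\phi_{t,k}$ entry $\text{tr}(\bfU_{t,k})/(2\sigma^2_t)$ with $\bfU_{t,k}=\dot{\bfR}^k_t\bfR_t^{-1}$, and the $\phi_{t,j}$--$\phi_{t,k}$ entry $\tfrac12\text{tr}(\bfU_{t,j}\bfU_{t,k})$. Pulling the factors of $\sigma^2_t$ out of each row/column of this $(d+1)\times(d+1)$ matrix shows its determinant equals $(2\sigma^2_t)^{-(d+1)}\sigma_t^{-2}\cdot|I^J_t(\bfphi_t)|$ up to a constant, so $|\,\text{FI}(\sigma^2_t,\bfphi_t)\,|^{1/2}\propto \sigma_t^{-(d+2)/2}\cdot(\sigma_t^2)^{?}\,|I^J_t(\bfphi_t)|^{1/2}$; tracking the exponent of $\sigma^2_t$ carefully yields the claimed $a_t=1$ for $\pi^{J1}$. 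For $\pi^{J2}$ one multiplies in $|\,\text{FI}(\bfb_t)\,|^{1/2}=\sigma_t^{-q_t}|\bfX_t^\top\bfR_t^{-1}\bfX_t|^{1/2}$, which adds $q_t/2$ to the exponent of $(\sigma^2_t)^{-1}$, giving $a_t=1+q_t/2$ and the extra determinant factor $\prod_t|\bfX_t^\top\bfR_t^{-1}\bfX_t|^{1/2}$.

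Finally I would assemble the pieces: because the sampling distribution~\eqref{eqn: sampling dist} factorizes as a product over levels and the parameters $\bftheta_t$ are assumed independent, the Fisher information of the full parameter vector is block-diagonal in $t$, hence its determinant is the product of the per-level determinants and the Jeffreys priors factor as $\prod_{t=1}^s$ of the single-level expressions just computed; identifying each single-level factor with the stated form of~\eqref{eqn: general prior} completes the proof. One subtlety worth flagging: the log-likelihood of $\bfy_t$ is conditional on $\bfy_{t-1}$ through the regressor $W_{t-1}=y_{t-1}(\calX_t)$ inside $\bfX_t$, so strictly speaking one is computing the Fisher information of the conditional model $\pi(\bfy_t\mid\bfy_{t-1},\bftheta_t)$ with $\bfX_t$ treated as fixed; this is consistent with how $\pi^{J1},\pi^{J2}$ are defined (level-by-level, conditional) and is the reason the cross-level terms drop out, but it should be stated explicitly.

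I expect the main obstacle to be the careful bookkeeping of the powers of $\sigma^2_t$ when extracting $\text{det}$ of the $(d+1)\times(d+1)$ block and combining with the location block — it is easy to be off by a factor and land on the wrong $a_t$. A secondary but genuine subtlety is justifying the block-diagonality (location vs.\ variance/range) rigorously, i.e.\ that $E[\partial^2_{\bfb_t,\sigma^2_t}\ell_t]=0$ and $E[\partial^2_{\bfb_t,\phi_{t,k}}\ell_t]=0$; these follow because the relevant mixed second derivatives are linear in $(\bfy_t-\bfX_t\bfb_t)$ and thus have zero mean, but this should be noted since it is what makes $\pi^{J1}$ (which \emph{assumes} this independence) actually coincide with the variance/range marginal of $\pi^{J2}$ up to the $|\bfX_t^\top\bfR_t^{-1}\bfX_t|^{1/2}$ factor.
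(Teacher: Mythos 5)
Your proposal is correct in substance, but it takes a more self-contained route than the paper: the paper's entire proof is a one-line citation of Proposition~2.2 in Paulo (2005), which packages exactly the per-level Fisher-information computation you carry out by hand. What your derivation buys is transparency about the two points the citation hides: (a) that the level-$t$ likelihood is the \emph{conditional} Gaussian linear model $\mathcal{N}(\bfX_t\bfb_t,\sigma^2_t\bfR_t)$ with $W_{t-1}$ inside $\bfX_t$ treated as a fixed regressor, which is what lets Paulo's single-GP result apply level by level, and (b) that the assumed independence of the $\bftheta_t$ makes the full information block-diagonal in $t$ so the priors multiply. Your Fisher-information entries ($n_t/(2\sigma_t^4)$, $\mathrm{tr}(\bfU_{t,k})/(2\sigma_t^2)$, $\tfrac12\mathrm{tr}(\bfU_{t,j}\bfU_{t,k})$) and the orthogonality of the location block are all standard and correct, and the conclusion $a_t=1$ for $\pi^{J1}$ and $a_t=1+q_t/2$ with the extra factor $\prod_t|\bfX_t^\top\bfR_t^{-1}\bfX_t|^{1/2}$ for $\pi^{J2}$ matches the theorem. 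One small bookkeeping slip: your intermediate factorization $(2\sigma^2_t)^{-(d+1)}\sigma_t^{-2}\,|I^J_t(\bfphi_t)|$ is not right --- the $\sigma_t$ powers enter only through the first row and first column of the $(d+1)\times(d+1)$ block, so the determinant is $2^{-(d+1)}\sigma_t^{-4}\,|I^J_t(\bfphi_t)|$, whose square root is proportional to $(\sigma_t^2)^{-1}|I^J_t(\bfphi_t)|^{1/2}$; this still gives $a_t=1$, so your final answer stands, but the intermediate expression as written would not.
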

\begin{proof}
This result follows directly from Proposition 2.2 in \cite{Paulo2005}.
\end{proof}

\begin{theorem}
For the cokriging model with sampling distribution~\eqref{eqn: sampling dist}, the independent reference prior $\pi^R$, independent Jeffreys priors $\pi^{J1}, \pi^{J2}$ yield proper posteriors satisfying the condition~\eqref{eqn: condition for properiety}.
\end{theorem}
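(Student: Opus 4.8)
The plan is to reduce the $sd$-dimensional propriety condition~\eqref{eqn: condition for properiety} to $s$ independent one-level conditions by exploiting the product structure of both $L^I(\bfphi\mid\bfy)$ and the objective priors, and then to identify each factor with the marginal posterior of the range parameters in an ordinary universal-kriging model, whose propriety is established in \cite{Berger2001, Paulo2005}. First I would note that in \eqref{eqn: integrated likelihood} the $t$-th factor depends on $\bfphi$ only through $\bfphi_t$ (the design matrix $\bfX_t=[\bfH_t,\,y_{t-1}(\calX_t)]$ is a function of the observed output alone, not of $\bfphi$), and that each prior in Theorems~\ref{thm: reference prior} and~\ref{thm: Jeffreys prior} factorizes as $\pi(\bfphi)=\prod_{t=1}^s\pi_t(\bfphi_t)$. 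Hence
\begin{align*}
\int_{(0,\infty)^{sd}} L^I(\bfphi\mid\bfy)\,\pi(\bfphi)\,d\bfphi = \prod_{t=1}^s\int_{(0,\infty)^{d}} L^I_t(\bfphi_t)\,\pi_t(\bfphi_t)\,d\bfphi_t ,
\end{align*}
with $L^I_t(\bfphi_t):=|\bfR_t|^{-1/2}|\bfX_t^\top\bfR_t^{-1}\bfX_t|^{-1/2}\{S^2(\bfphi_t)\}^{-((n_t-q_t)/2+a_t-1)}$, so it suffices to prove $0<\int_{(0,\infty)^d}L^I_t(\bfphi_t)\pi_t(\bfphi_t)\,d\bfphi_t<\infty$ for each $t$.

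Next I would recognize $L^I_t(\bfphi_t)\pi_t(\bfphi_t)$ as precisely the marginal posterior of $\bfphi_t$ in a single universal-kriging model with response $\bfy_t\in\mathbb{R}^{n_t}$, design matrix $\bfX_t\in\mathbb{R}^{n_t\times q_t}$ of full column rank almost surely (since $y_{t-1}(\calX_t)$ is a nondegenerate Gaussian vector), correlation family obeying Assumption~\ref{ass: correlation}, and prior $\propto(\sigma_t^2)^{-a_t}\pi_t(\bfphi_t)$ on $(\bfbeta_t,\gamma_{t-1},\sigma_t^2)$: for $a_t=1$ and $\pi_t=|I_t^R(\bfphi_t)|^{1/2}$ this is the reference-prior posterior of \cite{Berger2001}; for $a_t=1$ and $\pi_t=|I_t^J(\bfphi_t)|^{1/2}$ it is the independent-Jeffreys posterior of \cite{Paulo2005}; and for $a_t=1+q_t/2$, $\pi_t=|I_t^J(\bfphi_t)|^{1/2}|\bfX_t^\top\bfR_t^{-1}\bfX_t|^{1/2}$ the factors $|\bfX_t^\top\bfR_t^{-1}\bfX_t|$ cancel and the exponent on $S^2(\bfphi_t)$ collapses to $-n_t/2$, giving the Jeffreys-rule posterior $|\bfR_t|^{-1/2}\{S^2(\bfphi_t)\}^{-n_t/2}|I_t^J(\bfphi_t)|^{1/2}$ of \cite{Paulo2005}. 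Positivity is immediate because on $(0,\infty)^d$ the integrand is continuous and strictly positive: $\bfR_t\succ0$, $\bfX_t^\top\bfR_t^{-1}\bfX_t$ is nonsingular, $\pi_t>0$, and $S^2(\bfphi_t)=\bfy_t^\top\bfQ_t\bfy_t>0$ since $n_t>q_t$ forces $\bfy_t\notin\mathcal{C}(\bfX_t)$ almost surely under the sampling model. Finiteness then follows from the propriety theorems of \cite{Berger2001, Paulo2005}, whose hypotheses—$n_t>q_t$, $\bfX_t$ of full rank, Assumption~\ref{ass: correlation}—hold at every level $t=1,\ldots,s$.

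A self-contained proof would instead bound each level-$t$ integral directly by splitting $(0,\infty)^d$ into three regimes: on a compact box bounded away from $0$ and $\infty$ the integrand is continuous, hence integrable; in the region where some $\phi_{t,\ell}\to0^+$, Lemma~\ref{lem: behavior of integrated likelihood}(i) keeps $L^I_t$ bounded and bounded away from zero while $\bfR_t\to\mathbf{I}$ and $\dot{\bfR}_t^k$ tends to a fixed matrix, so every entry of $I_t^R(\bfphi_t)$ and $I_t^J(\bfphi_t)$ has a finite limit and $\pi_t$ stays bounded; and in the region where a nonempty subset of the $\phi_{t,\ell}$ tends to $\infty$, Lemma~\ref{lem: behavior of integrated likelihood}(ii) gives $L^I_t=O\!\big((\sum_{\ell}\nu_{t,\ell}(\phi_{t,\ell}))^{a_t-c_t}\big)$, which one matches against the tail decay of $|I_t^R(\bfphi_t)|^{1/2}$ and $|I_t^J(\bfphi_t)|^{1/2}$ read off from Assumption~\ref{ass: correlation}(ii); the change of variables $u_\ell=\nu_{t,\ell}(\phi_{t,\ell})$ then turns the tail into a convergent integral of the form $\int_0 u^{\epsilon-1}\,du$ with $\epsilon>0$, exactly as in \cite{Gu2018, Berger2001}.

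The main obstacle is this last, at-infinity, regime: one must control every ``mixed'' boundary face—some $\phi_{t,\ell}\to\infty$, the rest bounded—uniformly, and for $\pi^{J2}$ keep careful track of the combined effect of the enlarged exponent $a_t=1+q_t/2$ and the extra factor $|\bfX_t^\top\bfR_t^{-1}\bfX_t|^{1/2}$ in the prior, which is the borderline case where the margin of convergence is smallest and where one needs $n_t-q_t$ sufficiently large (consistent with the standing requirement $n_t-q_t>2$). By contrast, the positivity claim and the near-origin analysis are routine.
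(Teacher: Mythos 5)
Your proposal is correct and follows essentially the same route as the paper: factor the $sd$-dimensional integral into $s$ level-wise integrals (the paper's appeal to Fubini), identify each factor as the marginal posterior of the range parameters in a single universal-kriging model, and invoke the propriety results of \cite{Berger2001, Paulo2005}. The additional positivity discussion and the sketched self-contained tail analysis are elaborations the paper omits, but the core argument is identical.
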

\begin{proof}
It follows from the results in \cite{Paulo2005} that 
\begin{align*}
    0<\int_{\mathbf{R}^d} L^I(\bfphi_t\mid\bfy) \pi(\bfphi_t)\, d\bfphi_t < \infty,\quad & \text{ for } \quad  \pi(\bfphi_t)=\pi^R(\bfphi_t), \pi^{J1}(\bfphi_t), \pi^{J2}(\bfphi_t),
\end{align*}
where $L^I(\bfphi_t\mid\bfy) \propto |\bfR_t|^{-1/2}|\bfX_t^\top \bfR_t^{-1} \bfX_t|^{-1/2} \{S^2(\bfphi_t)\}^{-(({n}_t - q_t)/2+a_t-1)}$, $\pi^R(\bfphi_t) \propto I_t^R(\bfphi_t)$, $\pi^{J1}(\bfphi_t) \propto I_t^J(\bfphi_t)$, $\pi^{J2}(\bfphi_t) \propto I_t^J(\bfphi_t) |\bfX_t^\top \bfR_t^{-1} \bfX_t|^{1/2}$. Then the condition~\eqref{eqn: condition for properiety} is satisfied by Fubini's theorem.  
\end{proof}

\subsection{Parameter Estimation}
With the above prior specification, the integrated posterior of $\bfphi$ given $\bfy$ is given by 
\begin{align} \label{eqn: posterior}
    \pi(\bfphi\mid \bfy) \propto  \prod_{t=1}^s |{\bfR}_t|^{-1/2} |{\bfX}_t^\top {\bfR}_t^{-1} {\bfX}_t|^{-1/2} \{S^2(\bfphi_t)\}^{-(({n}_t - q_t)/2 + a_t-1)} \pi(\bfphi_t),
\end{align}
where $\pi(\bfphi_t)$ refers to independent reference prior and independent Jeffreys priors. Inference based on this posterior distribution can be made in a fully Bayesian paradigm via Markov chain Monte Carlo methods. Although uncertainties in all parameters can be taken into consideration in a fully Bayesian approach, the associated computation can be too expensive in practice due to repeated evaluation of the integrated likelihood~\eqref{eqn: integrated likelihood}. 

In what follows, we focus on empirical Bayesian inference by maximizing the posterior~\eqref{eqn: posterior} to obtain the estimate of $\bfphi$. In fact, the numerical optimization can be performed for $\bfphi_t$ independently, that is, for $t=1, \ldots, s$,
\begin{align}
\begin{split}
    \hat{\bfphi}_t := \underset{\bfphi_t}{\text{argmax}} \left\{-\frac{1}{2}\ln |\bfR_t| - \frac{1}{2}\ln|\bfX_t^\top \bfR_t^{-1} \bfX_t| - \left(\frac{n_t-q_t}{2}+a_t-1\right)\ln S^2(\bfphi_t) 
     + \ln \pi(\bfphi_t)\right\},
    \end{split}
\end{align}
where the maximization step can be performed using standard optimization algorithms such as the Nelder-Mead algorithm \cite{Nocedal2006}. Once $\hat{\bfphi}_t$ is obtained, the cokriging predictions and cokriging variances can be obtained based on the posterior predictive distribution $\pi(\bfy(\bfx_0)\mid \bfy, \hat{\bfphi})$. Notice that there is no need to estimate other model parameters $\{\bfbeta, \bfgamma, \bfsigma^2\}$. If desired, these model parameters can be estimated based on the posterior distribution $\pi(\bfbeta, \bfgamma\mid \bfy, \hat{\bfphi})$ and $\pi(\bfsigma^2\mid \bfy, \hat{\bfphi})$. The detailed procedures to estimate these parameters are given in Appendix~\ref{app: parameter estimation}.

\section{Numerical Illustration} \label{sec: numerical demonstration}
The main goal of the numerical illustration is to demonstrate the predictive performance of the autoregressive cokriging model with objective priors developed in previous sections. In addition, we also include the jointly robust prior \cite{Gu2019} in the comparison, since the jointly robust prior mimics the behavior of reference priors for Gaussian process models and it is a proper prior that allows fast computation. The proposed methods have been implemented via the \textsf{R} package \texttt{ARCokrig} \cite{ARCokrig} with version 0.1.0 available at \url{https://github.com/pulongma/ARCokrig}. All the numerical examples can be reproduced via \textsf{R} code available at \url{https://github.com/pulongma/OBayesARCokrig}. The form of the jointly robust prior is 
\begin{align*}
\pi^{JR}(B_1, \ldots, B_d) = C\left( \sum_{i=1}^d C_i B_i  \right)^{a_0} \exp\left\{ - b_0 \left( \sum_{i=1}^d C_i B_i  \right)  \right\},
\end{align*}
where $B_i$'s are inverse range parameters; $C$ is a normalizing constant; $a_0>-(d+1)$, $b_0>0$ and $C_i=n^{-1/d}|x_i^{max} - x_i^{min} |$ are hyperparameters. $a_0$ is a parameter controlling the polynomial penalty to avoid singular correlation matrix and $b_0$ is a parameter controlling the exponential penalty to avoid diagonal correlation matrix. $n$ here refers to the number of model runs; $x_i^{max}, x_i^{min}$ refer to the maximum and minimum of input parameter $x_i$, respectively. \cite{Gu2019} recommends the following default settings for these parameters: $a_0=0.5-d$ and $b_0=1$. However, it was pointed out in \cite{Gu2019} that the choice of $a_0$ is an open problem and is problem-specific. In the following numerical study, we fix $b_0$ at 1, and tune the parameter $a_0$ to achieve comparable results. For the autoregressive cokriging model, independent jointly robust priors are assumed for correlation parameters across different levels of fidelity. In the following numerical comparison, the proposed cokriging predictors and cokriging variances explicitly take into account the uncertainties in estimating $\bfbeta, \bfgamma, \bfsigma^2$, while the closed-form predictive formulas in \cite{Gratiet2013} do not take into account the uncertainties in estimating $\bfbeta, \bfgamma, \bfsigma^2$, and $\bfphi$.  Numerical examples in \cite{Gratiet2013} indicate that the approach in \cite{Gratiet2013} performs better than the approaches in \cite{Kennedy2000, Qian2008} in terms of predictive accuracy and computational cost. In addition, the proposed inference approach is very similar to the plug-in MLE approach in \cite{Gratiet2013} except for the fact that objective priors and new predictive formulas are used. So, the focus here is to compare the proposed inference approach with the plug-in MLE approach in \cite{Gratiet2013}.

In the following numerical examples, the covariance function model is specified as a product form: $r(h) = \sigma^2 \prod_{i=1}^d r_i(h_i)$, where $r_i(h_i)$ can have the following forms:
\begin{align*}
r_i(h_i) &= \exp\left \{- \left(\frac{h_i}{\phi_i} \right)^{\alpha}  \right \}, \text{ Power-exponential correlation},  \\
r_i(h_i) &= \frac{2^{1-\nu}}{\Gamma(\nu)} \left( \sqrt{2\nu} \frac{h_i}{\phi_i} \right)^{\nu} \mathcal{K}_{\nu} \left( \sqrt{2\nu} \frac{h_i}{\phi_i} \right), \text{ Mat\'ern correlation}, 
\end{align*}
where $\phi_i$ is the range parameter for the $i$th input dimension. $\alpha$ is the roughness parameter in the power-exponential correlation. In practice it is often fixed at 1.9 to avoid numerical instability. $\nu>0$ is the smoothness parameter controlling the differentiability of the Gaussian processes. When $\nu$ is fixed at 5/2, the corresponding Gaussian process realizations will be twice differentiable in the mean square sense.  $\mathcal{K}_{\nu}$ is the modified Bessel function of the second kind. In all the numerical examples, the range parameter will be reparameterized with the log inverse range parameters $\xi_i:=\log(1/\phi_i)$. This parametrization facilitates robust estimation for Gaussian process emulation as shown in \cite{Gu2018, Gu2019} for reference priors and jointly robust priors. The predictive performance is measured based on root-mean-squared-prediction error (RMSE), coverage probability of the 95\% equal-tail credible interval (CVG(95\%)), and average length of the 95\% equal-tail credible interval (ALCI(95\%)). 

In the following numerical examples, the roughness parameter is always fixed at 1.9 for power-exponential correlation for the proposed approach. The plug-in MLE approach by \cite{Gratiet2013} is implemented in the \textsf{R} package \texttt{MuFiCokriging} by \cite{MuFiCokriging}. It is worth noting that \texttt{MuFiCokriging} seems to estimate the roughness parameter $\alpha$ in the power-exponential correlation and there is no way for users to fix the roughness parameter to be 1.9 in the power-exponential correlation through the package \texttt{MuFiCokriging}. It is worth noting that when the process is assumed to be rough (such as the one under the power-exponential correlation with roughness parameter 1.9), the corresponding prediction uncertainty is larger than the one based on a smoother process (such as the one under the Mat\'ern-5/2 correlation). This is because the rougher the process realization is assumed to be, the larger uncertainty we obtain in the prediction.

\subsection{Testbed with the Borehole Function} \label{sec: borehole}

The performance is investigated with the 8-dimensional borehole function that models water flow through a borehole drilled from two ground surfaces through two aquifers \cite{Harper1983}. Its fast evaluation makes it widely used for testing purposes in computer experiments \cite{Morris1993, Xiong2013}. Let $\bfx = (r_w, r, T_u, H_u, T_{\ell}, H_{\ell}, L, K_w)^\top$ be a vector of input variables in the borehole function with their physical meanings given in Appendix~\ref{app: testing function}. The response of the model is given by 
\begin{align*}
    y_h = \frac{2\pi T_u (H_u - H_{\ell})}{\log(r/r_{w}) \left[ 1 + \frac{2LT_u}{\log(r/r_w)r_w^2 K_w} + T_u/T_{\ell} \right] },
\end{align*}
and its low-fidelity output is given by 
\begin{align*}
    y_l = \frac{5 T_u (H_u - H_{\ell})}{\log(r/r_{w}) \left[ 1.5 + \frac{2LT_u}{\log(r/r_w)r_w^2 K_w} + T_u/T_{\ell} \right] }.
\end{align*}

To setup the experiment, 100 inputs are selected via Latin hypercube design with the \texttt{DiceDesign} package \cite{DiceDesign}. Then 20 inputs are randomly held out to evaluate predictive performance. The remaining 80 inputs are used to run the low-fidelity code $y_l(\cdot)$, and 30 inputs are randomly selected from these 80 inputs to run the high-fidelity code $y_h(\cdot)$. The predictive performance of the autoregressive cokriging model based on the proposed new formulas is compared with the plug-in MLE approach in \cite{Gratiet2013}. For all methods, the mean function is chosen to be constant. The covariance function is chosen to be the power-exponential correlation with roughness parameter fixed at 1.9 for the proposed approach. The results based on the Mat\'ern covariance with smoothness parameter fixed at 2.5 are reported in Appendix~\ref{app: Result with Matern}.

Figure~\ref{fig: borehole example} compares predictive means against the high-fidelity code output at 20 held-out inputs. It indicates that the proposed approach gives better prediction than the plug-in MLE approach in \cite{Gratiet2013}, since the predictive values based on the proposed method are more concentrated along the 45 degree line than the plug-in MLE approach in \cite{Gratiet2013}. Table~\ref{table: borehole example with powexp} shows that the new cokriging predictors and cokriging variances give better predictive performance than the plug-in MLE approach in \cite{Gratiet2013} in terms of RMSE and ALCI. The result based on the jointly robust prior is obtained by fixing the hyperparameter $a_0$ at 0.2 after trying several different values. It is worth noting that the proposed cokriging variances take into account uncertainties in estimating all model parameters except the range parameters, but they still give much shorter predictive intervals. This also reveals that maximizing the posterior with the uniform improper priors for correlation parameters or maximizing the concentrated restricted likelihood are less preferred than maximizing the posterior with the proposed objective priors or the jointly robust prior. 

The plug-in MLE approach in \cite{Gratiet2013} is found to yield larger predictive intervals than the proposed approach  even though uncertainties due to estimation of $\bfbeta, \bfgamma, \bfsigma^2$ are not accounted for. This is occurring because the concentrated restricted likelihood in \cite{Gratiet2013} can have nonrobust parameter estimates as discussed in Appendix~\ref{app: nonrobust estimation}, and we found in this example that several correlation parameters are estimated to be very large, resulting in a nearly singular correlation matrix.  The independent reference prior gives slightly better performance than the independent Jeffreys prior. In contrast, the jointly robust prior seems to give worse predictive performance than these two objective priors in terms of RMSE and ALCI.

\begin{figure}[htbp]
\renewcommand{\figurename}{Fig.}
\captionsetup{labelsep=period}
\begin{center}
\makebox[\textwidth][c]{ \includegraphics[width=1.0\textwidth, height=0.30\textheight]{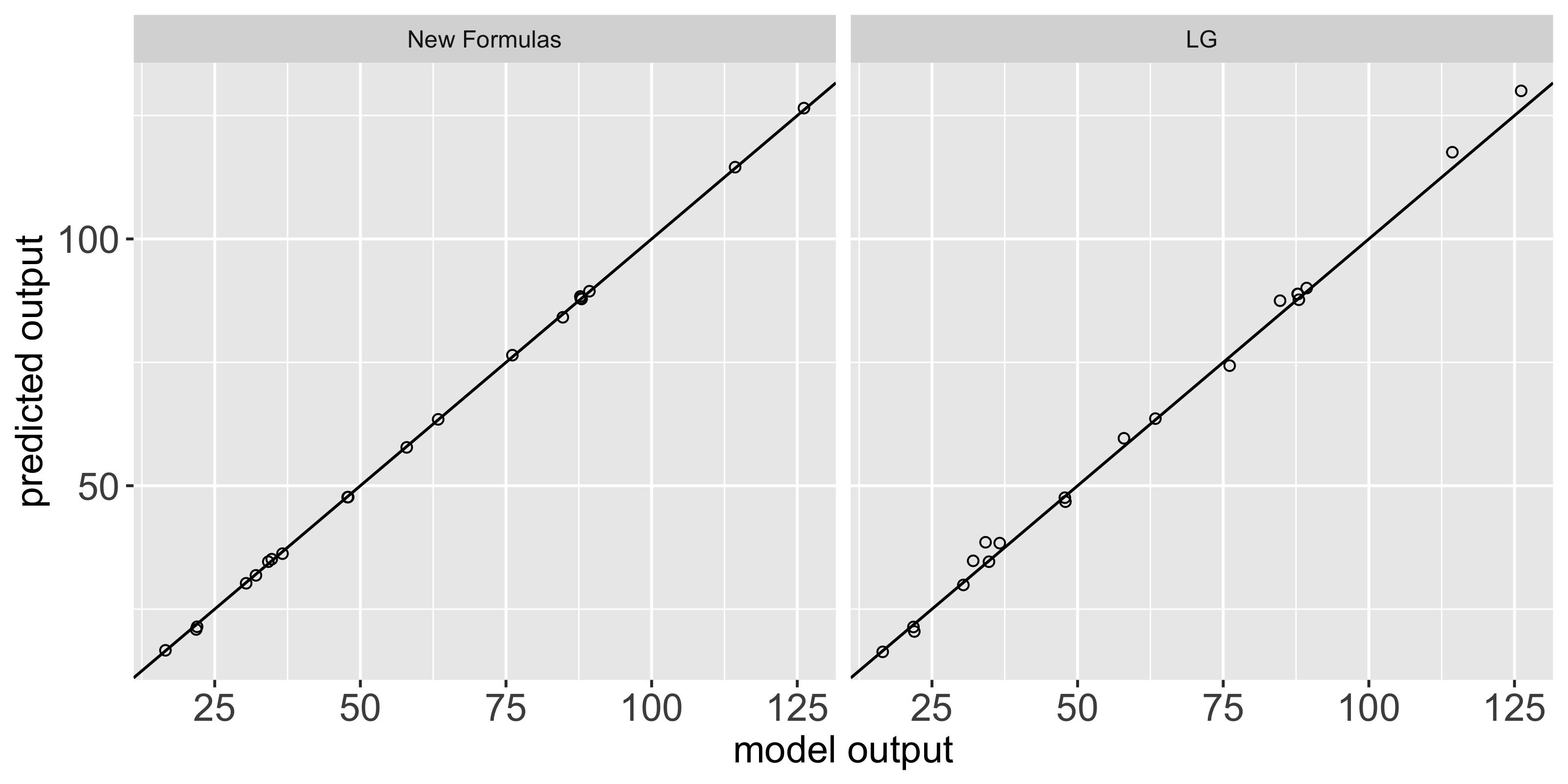}}
\caption{Prediction versus held-out data at 20 new inputs. The horizontal axis represents the held-out model output at 20 inputs, and the vertical axis represent the prediction results using the proposed new formulas (left panel) with the independent reference prior and the prediction results using the plug-in MLE approach in \cite{Gratiet2013} (right panel).}
\label{fig: borehole example}
\end{center}
\end{figure}

\begin{table}[htbp]
\centering
\normalsize
   \caption{Predictive performance at 20 held-out inputs in the autoregressive cokriging model using the proposed objective priors and using the plug-in MLE approach in Gratiet \cite{Gratiet2013} when the power-exponential correlation is used.}
     {\resizebox{1.0\textwidth}{!}{%
  \setlength{\tabcolsep}{2.0em}
   \begin{tabular}{l c c c } 
   \toprule 
   \noalign{\vskip 1.5pt}
& RMSE & CVG(95\%)   & ALCI(95\%)   \\  \noalign{\vskip 1.5pt}
\noalign{\vskip 1.5pt} \hline \noalign{\vskip 3pt}   \noalign{\vskip 1.5pt}
				 {Independent reference prior} &0.799      & 1.00 & 6.028  \\ 
				 \noalign{\vskip 4pt}				    
				 {Independent Jeffreys prior} &0.838   & 1.00   & 6.179  \\
				 \noalign{\vskip 4pt}  
		       {Jointly robust prior} & 1.000   & 1.00    &  11.61  \\
		       \noalign{\vskip 4pt} 
		       {plug-in MLE} &  2.587   & 1.00    & 19.68  \\
\noalign{\vskip 1.5pt} \bottomrule
   \end{tabular}%
   }}
   \label{table: borehole example with powexp}
\end{table}

\subsection{Application to Fluidized-Bed Processes} \label{sec: FBed}
This section studies the predictive performance of the proposed cokriging formulas with objective priors for the fluidized-bed process experiment analyzed in \cite{Qian2008, Gratiet2013}. The computer model named ``Topsim" simulates the temperature of the steady-state thermodynamic operation point for a fluidized-bed process based on eight physical parameters: fluid velocity of the fluidization air, temperature of the air from the pump, flow rate of the coating solution, temperature of the coating solution, coating solution dry matter content, pressure of atomized air, room temperature, and humidity. \cite{Dewettinck1999} consider 28 different process conditions with coating solution used for distilled water (i.e., coating solution dry matter content is 0) and the room temperature at 20$^\circ$C. For each input configuration, one physical experiment $T_{exp}$ and three computer model runs ($T_1, T_2, T_3)$ were conducted, where $T_{exp}$ is the experimental response, $T_3$ is the most accurate code modeling the experiment, $T_2$ is a simplified version of $T_3$, and $T_1$ is the lowest accurate code modeling the experiment. The six inputs and corresponding outputs $T_1, T_2, T_3$, and $T_{exp}$ for these 28 runs are given in \cite{Qian2008}. The following numerical study mainly follows the procedure in \cite{Gratiet2013}, and compares the proposed approach with the plug-in MLE approach in \cite{Gratiet2013}, since Gratiet \cite{Gratiet2013} demonstrates that the plug-in MLE approach performs better than the approach in \cite{Qian2008}. Following \cite{Gratiet2013}, each input parameter is also scaled to the unit interval $[0, 1]$.

The predictive performance of a 2-level cokriging model is investigated based on 20 randomly selected $T_{exp}$ runs and all 28 $T_2$ runs. The remaining eight $T_{exp}$ runs are used for model validation. The mean function is chosen to be constant and the covariance function is chosen to be the power-exponential correlation with roughness parameter fixed at 1.9. The results based on the Mat\'ern with smoothness parameter 2.5 at each level are reported in Appendix~\ref{app: Result with Matern}. In the jointly robust prior, the hyperparameter $a_0$ chosen to be the default setting seems to give better performance than the other settings. The results in Table~\ref{table: two-level example with powexp} show that the 2-level cokriging model under the objective priors and jointly robust priors yields much smaller RMSE and ALCI than the plug-in MLE approach in \cite{Gratiet2013}, while the 2-level cokriging models with all different priors have the empirical coverage probability smaller than the nominal level 0.95.  We also found that the roughness parameters in the power-exponential correlation are 2 in the plug-in MLE approach via the package \texttt{MuFiCokriging}. This indicates that the corresponding process realizations will be infinitely differentiable. As expected, the corresponding prediction uncertainty is smaller in Table~\ref{table: two-level example with powexp} than the one under the Mat\'ern-5/2 correlation in Table~\ref{table: two-level example}. 

Then we investigate the predictive performance of a 3-level cokriging model. To setup the design, we used 10 $T_{exp}$ runs (with the row number given by 1, 3, 8, 10, 12, 14, 18, 19, 20, 27 in Table 4 in \cite{Qian2008}), 20 $T_3$ runs (with the row number given by 1, 2, 3, 5, 6, 7, 8, 9, 10, 11, 12, 13, 14, 16, 18, 19, 20, 22, 24, 27), and all the 28 $T_1$ runs. The remaining 18 $T_{exp}$ runs are used for model validation. Table~\ref{table: three-level example with powexp} shows that the 3-level cokriging model gives much smaller RMSE and ALCI with independent reference prior, independent Jeffreys prior, and jointly robust prior than that with the plug-in MLE approach in \cite{Gratiet2013}. For the jointly robust prior, the hyperparameter $a_0$ chosen to be the default setting gives better results under other settings. Meanwhile, the objective priors and the jointly robust prior result in much shorter predictive intervals than the plug-in MLE approach in \cite{Gratiet2013}. Notice that the jointly robust prior leads to better predictive performance than the independent reference prior in terms of RMSE and ALCI. The difference of predictive performance between the independent reference prior and the independent Jeffreys prior is subtle. 

To briefly summarize, the objective priors and the jointly robust prior yield much better predictive performance than the plug-in MLE approach in \cite{Gratiet2013} in the 2-level cokriging model and the 3-level cokriging model. It is again worth noting that the approach in \cite{Gratiet2013} always gives larger predictive uncertainties than the proposed approach in all these examples. This is due to the fact that the estimated correlation parameters in \cite{Gratiet2013} are not robust, since several correlation parameters are estimated near zero. Such nonrobust estimates can mess up with prediction results. This suggests that inference with the objective priors should always be preferred over inference with the uniform improper priors or the plug-in MLE approach \cite{Gratiet2013} when reliable experts' opinions do not exist. In addition, the proposed cokriging predictors and cokriging variances are recommended for predictive inference in autoregressive cokriging models.

%\begin{table}[htbp]
%\centering
%\normalsize
%   \caption{Predictive performance under 2-level cokriging based on $T_{2}$ and $T_{exp}$ using the proposed objective priors and the plug-in MLE approach in \cite{Gratiet2013} when the Mat\'ern-5/2 correlation is used.}
%     {\resizebox{1.0\textwidth}{!}{%
%  \setlength{\tabcolsep}{2.0em}
%   \begin{tabular}{l c c c } 
%   \toprule 
%   \noalign{\vskip 1.5pt}
%& RMSE & CVG(95\%)   & ALCI(95\%)   \\  \noalign{\vskip 1.5pt}
%\noalign{\vskip 1.5pt} \hline \noalign{\vskip 3pt}   \noalign{\vskip 1.5pt}
%{Independent reference prior} &0.555      & 0.88 & 1.998  \\ 
%				 \noalign{\vskip 4pt}				    
%{Independent Jeffreys prior} &0.562   & 0.88   & 1.944  \\
%				 \noalign{\vskip 4pt}  
%{Jointly robust prior} & 0.524   & 0.88    &  2.214  \\
%		       \noalign{\vskip 4pt} 
% {plug-in MLE} &  2.219  & 0.88    & 7.315  \\
%\noalign{\vskip 1.5pt} \bottomrule
%   \end{tabular}%
%   }}
%   \label{table: two-level example}
%\end{table}

\begin{table}[htbp]
\centering
\normalsize
   \caption{Predictive performance under 2-level cokriging based on $T_{2}$ and $T_{exp}$ using the proposed objective priors and the plug-in MLE approach in \cite{Gratiet2013} when the power-exponential correlation is used.}
     {\resizebox{1.0\textwidth}{!}{%
  \setlength{\tabcolsep}{2.0em}
   \begin{tabular}{l c c c } 
   \toprule 
   \noalign{\vskip 1.5pt}
& RMSE & CVG(95\%)   & ALCI(95\%)   \\  \noalign{\vskip 1.5pt}
\noalign{\vskip 1.5pt} \hline \noalign{\vskip 3pt}   \noalign{\vskip 1.5pt}
{Independent reference prior} &0.551      & 0.88 & 2.239  \\ 
				 \noalign{\vskip 4pt}				    
{Independent Jeffreys prior} &0.542   & 0.88   & 2.236  \\
				 \noalign{\vskip 4pt}  
{Jointly robust prior} & 0.530   & 0.88    &  2.309  \\
		       \noalign{\vskip 4pt} 
 {plug-in MLE} &  1.968  & 0.88    & 6.028  \\
\noalign{\vskip 1.5pt} \bottomrule
   \end{tabular}%
   }}
   \label{table: two-level example with powexp}
\end{table}

\begin{table}[htbp]
\centering
\normalsize
   \caption{Predictive performance under 3-level cokriging based on $T_{1}$, $T_3$, and $T_{exp}$ using the proposed objective priors and the plug-in MLE approach in \cite{Gratiet2013} when the power-exponential correlation is used.}
     {\resizebox{1.0\textwidth}{!}{%
  \setlength{\tabcolsep}{2.0em}
   \begin{tabular}{l c c c } 
   \toprule 
   \noalign{\vskip 1.5pt}
& RMSE & CVG(95\%)   & ALCI(95\%)   \\  \noalign{\vskip 1.5pt}
\noalign{\vskip 1.5pt} \hline \noalign{\vskip 3pt}   \noalign{\vskip 1.5pt}
{Independent reference prior} &0.945      & 1.00 & 7.857  \\ 
				 \noalign{\vskip 4pt}				    
{Independent Jeffreys prior} &1.086   & 1.00   & 7.645  \\
				 \noalign{\vskip 4pt}  
{Jointly robust prior} & 0.874  & 1.00    &  5.620  \\
		       \noalign{\vskip 4pt} 
{plug-in MLE} &  8.844  & 0.72    & 19.04  \\
\noalign{\vskip 1.5pt} \bottomrule
   \end{tabular}%
   }}
   \label{table: three-level example with powexp}
\end{table}

\section{Discussion} \label{sec: discussion}
This article presents a unifying view in making prediction and parameter estimation in a computationally efficient way in the sense that the computational cost of both prediction and parameter estimation in an $s$-level cokriging model is the same as that in $s$ independent kriging models. The formulas of the predictive distributions account for uncertainties in all model parameters except correlation parameters. The objective Bayesian analysis performed in the autoregressive cokriging model can be used as a default choice when prior information is challenging to obtain. In addition, the independent reference prior can also encourage robust estimation of correlation parameters. 

The independent reference prior and independent Jeffreys priors are shown to yield proper posterior distributions. The predictive performance under these objective priors and independent jointly robust priors are also compared based on frequentist properties under various numerical studies. The numerical examples show that the objective priors and the jointly robust prior yield very similar predictive performance in 2-level and 3-level cokriging models. We also found that the jointly robust prior can provides better predictive performance than the independent reference prior sometimes, but this requires tuning its hyperparameters. The determination of optimal values for its hyperparameters is still an open question. 

It is worth noting that the reference prior should be preferred over the Jeffreys prior especially when the dimension of input space is large as pointed out by several previous works \cite{Berger2001, Berger2015}. As we assume the priors are independent across different levels of fidelity, the behavior of a joint prior across all levels is solely characterized by the behavior of priors at each level of fidelity. At each level of fidelity, the reference prior mass moves from the smaller values of range parameters to the larger values of range parameters for each input dimension. This implies that a degenerate case (near-diagonal correlation matrix) should be avoided. When input dimension increases, the chance that at least one range parameter is estimated to be small increases, if the prior mass does not change along with the input dimension, and consequently, the chance that near-diagonal correlation matrix also increases. The reference prior adapts to the increase of the dimension by concentrating more prior mass at larger range parameters when input dimension increases. Similar result is also pointed out by \cite{Gu2018} for Gaussian processes.

When designs are not hierarchically nested, there is no closed-form expression for the marginal likelihood function, and hence objective Bayesian analysis could be very challenging. However, the independent jointly robust prior could be a promising choice for this situation, since it has comparable predictive performance and it is a proper prior that allows fast computation. This has been used in \cite{Ma2019PPCokriging} for parameter estimation in a cokriging model that emulates high-dimensional output from multiple computer models. Recently, \cite{Ma2019Cov} propose a new class of correlation functions, which has a polynomial decaying tail and allows any degrees of smoothness of process realization. This correlation function seems to perform better than the widely-used Mat\'ern class. Thus, it would be also interesting to develop objective priors for correlation parameters in this correlation function.

%\section*{Appendix}

\appendix

%\appendixone

\section{Computational Details}
\subsection{Gaussian Process Regression} \label{app: GPR}
Here some useful facts are reviewed in Gaussian process regression and they will be repeatedly used in the derivation of formulas in this paper. The notations in this section shall not be confused with notations used in other places of this paper. Suppose that $z(\cdot) \sim \mathcal{GP}(\bfh^\top(\cdot)\bfbeta,\, \sigma^2r(\cdot, \cdot|\bfphi))$ with output values $\bfz:=(z(\bfx_1), \ldots, z(\bfx_n))^\top$ taken at input values $\mathcal{X}:=\{\bfx_i \}_{i=1}^n$. For any new input $\bfx_0$, we have  
\begin{align*}
z(\bfx_0) \mid \bfz, \bfbeta, \sigma^2, \bfphi \sim \mathcal{N}(\bfh^\top(\bfx_0)\bfbeta,\, \sigma^2\{r(\bfx_0, \bfx_0|\bfphi) - 
\bfr^\top(\bfx_0) \bfR^{-1} \bfr(\bfx_0) \} ),
\end{align*}
where $\bfH:=[\bfh(\bfx_1), \ldots, \bfh(\bfx_n)]^\top$. $\bfR:=[r(\bfx_i, \bfx_j)]_{i,j=1}^n$. $\bfr(\bfx_0):=r({\calX}, \bfx_0\mid \bfphi)$.

Integrating out parameters $\bfbeta, \sigma^2$ with respect to the prior $\pi(\bfbeta, \sigma^2) \propto (\sigma^2)^{-1}$ yields the predictive distribution of $z(\bfx_0)$ given $\bfz$ and $\bfphi$:
\begin{align*}
z(\bfx_0) \mid \bfz, \bfphi \sim t_{n-q}(\hat{z}(\bfx_0),\, \hat{\sigma}^2c^*)
\end{align*}
with  
\begin{align*}
\begin{split}
\hat{z}(\bfx_0) &= \bfh^\top(\bfx_0) \hat{\bfbeta} + \bfr^\top(\bfx_0) \bfR^{-1} (\bfz - \bfH \hat{\bfbeta})\\
\hat{\sigma}^2 & :=(\bfz-\bfH\hat{\bfbeta})^{\top}\bfR^{-1}(\bfz-\bfH\hat{\bfbeta}) / (n-q),\\
c^* & :=r(\bfx_0,\bfx_0|\bfphi)-\bfr^\top(\bfx_0)\bfR^{-1}\bfr(\bfx_0) \\
&+ [\bfh(\bfx_0)-\bfH^{\top}\bfR^{-1}\bfr(\bfx_0)]^{\top}(\bfH^{\top}\bfR^{-1}\bfH)^{-1}[\bfh(\bfx_0)-\bfH^{\top}\bfR^{-1}\bfr(\bfx_0)],
\end{split}
\end{align*}
where $\hat{\bfbeta}: = (\bfH^\top \bfR^{-1} \bfH)^{-1}\bfH^\top \bfR^{-1} \bfz$. $q$ is the number of columns in $\bfH$.

Given the prior $\pi(\bfbeta, \sigma^2, \bfphi) \propto \pi(\bfphi) (\sigma^2)^{-a}$, the integrated likelihood in the Gaussian process model is   
\begin{align*}
\int (\sigma^2)^{-a} \mathcal{N}(\bfH\bfbeta, \sigma^2 \bfR) d (\bfbeta, \sigma^2) \propto 
|\bfR|^{-1/2} |\bfH^\top \bfR^{-1} \bfH|^{-1/2} (S^2)^{-\{(n-q)/2+a-1\}};
\end{align*}
here $S^2:=\bfz^\top \bfQ \bfz$ with $\bfQ:=\bfR^{-1}\bfP$ and $\bfP:=\mathbf{I} - \bfH (\bfH^\top \bfR^{-1} \bfH)^{-1} \bfH^\top \bfR^{-1}$. This formula will be used to derive the integrated likelihood in autoregressive cokriging models in Appendix~\ref{app: integrated likelihood}.

\subsection{Integrated Likelihood} \label{app: integrated likelihood}
Using the representation of the marginal likelihood function~\eqref{eqn: sampling dist} and the prior~\eqref{eqn: general prior}, it follows that 
\begin{align*}
& \int L({\bfy}\mid\bfbeta,\bfgamma,\bfsigma^{2},\bfphi)\pi(\bfbeta,\bfgamma,\bfsigma^{2},\bfphi)\,d(\bfbeta,\bfgamma,\bfsigma^{2}) \\
&=  \int \pi({\bfy}_{1}\mid\bfbeta_{1},\sigma_{1}^{2},\bfphi_{1}) \left\{ \prod_{t=2}^{s}\pi({\bfy}_{t}\mid{\bfy}_{t-1},\gamma_{t-1},\bfbeta_{t},\sigma_{t}^{2},\bfphi_{t}) \right\} \frac{\pi(\bfphi)}{\prod_{t=1}^s(\sigma^2_t)^{a_t}} d(\bfbeta,\bfgamma,\bfsigma^{2})  \\
&= \int (\sigma^2_1)^{-a_1} \mathcal{N}(\bfH_1\bfbeta_1, \sigma^2_1 \bfR_1) \left\{\prod_{t=2}^s \mathcal{N}({\bfH}_{t}\bfbeta_{t} + {W}_{t-1}\gamma_{t-1},\,\sigma_{t}^{2} {\bfR}_t) (\sigma^2_t)^{-a_t}\right\} d(\bfbeta,\bfgamma,\bfsigma^{2}) \times \pi(\bfphi) \\
&= \int (\sigma^2_1)^{-a_1} \mathcal{N}(\bfH_1\bfbeta_1, \sigma^2_1 \bfR_1) d(\bfbeta_1, \sigma^2_1)   \\
&\quad \times \left\{ \prod_{t=2}^s \int \mathcal{N}({\bfH}_{t}\bfbeta_{t} + {W}_{t-1}\gamma_{t-1},\,\sigma_{t}^{2} {\bfR}_t) (\sigma^2_t)^{-a_t}  d(\bfbeta_t, \gamma_{t-1}, \sigma^2_t) \right\} \times \pi(\bfphi) \\
 & = L^I(\bfphi\mid \bfy) \pi(\bfphi),
\end{align*}
where the last equation follows by applying the facts in Appendix~\ref{app: GPR} for $t=2, \ldots, s$.

\section{Proofs} \label{app: Fisher Info}

\subsection{Proof of Lemma~\ref{lem: conditional predictive dists}} \label{app: conditional predictive dist}
As $\{\calX_t\cup\{\bfx_0\}: t=1, \ldots, s\}$ forms a collection of nested design, it follows from the cokriging model~\eqref{eqn: co-kriging model} that 
\begin{align*}
    \pi(\bfy(\bfx_0), \bfy | \bfbeta, \bfgamma, \bfphi) &= \pi(y_1(\bfx_0), \bfy_1 | \bfbeta_1, \sigma^2_1, \bfphi_1) \prod_{t=2}^s \pi(y_t(\bfx_0), \bfy_t | y_{t-1}(\bfx_0), \bfy_{t-1}, \bfbeta_t, \gamma_{t-1}, \sigma^2_t, \bfphi_t), 
\end{align*}
where each joint distribution on the right hand side is a multivariate normal distribution. In particular, the joint distribution of $y_t(\bfx_0), \bfy_t$ given $y_{t-1}(\bfx_0), \bfy_{t-1}, \bfbeta_t, \gamma_{t-1}, \sigma^2_t, \bfphi_t$ is 
\begin{align*}
& \begin{pmatrix}
y_t(\bfx_0) \\
 \bfy_t
\end{pmatrix}
\bigg|\, y_{t-1}(\bfx_0), \bfy_{t-1}, \bfbeta_t, \gamma_{t-1}, \sigma^2_t, \bfphi_t \\
& \sim \mathcal{N}\left( 
\begin{pmatrix}  \bfh^\top(\bfx_0) \bfbeta_t + y_{t-1}(\bfx_0) \gamma_{t-1} \\  
\bfX_t \bfbeta_t 
\end{pmatrix}, 
\begin{pmatrix}
\sigma^2_t r(\bfx_0, \bfx_0| \bfphi_t) & \sigma^2_t \bfr_t^\top(\bfx_0) \\
\sigma^2_t \bfr_t(\bfx_0) & \sigma^2_t \bfR_t
\end{pmatrix}
\right).
\end{align*}
Using the facts in Appendix~\ref{app: GPR} yields the conditional distribution of $y_t(\bfx_0)$ given $y_{t-1}(\bfx_0),$ $\bfy_t, \bfy_{t-1}, \bfbeta_t, \gamma_{t-1}, \sigma^2_t, \bfphi_t$ as a normal distribution. Combining these conditional distributions for $t=1, 2, \ldots, s$ yields that 
\begin{align*}
    \pi(\bfy(\bfx_0) |\bfy, \bfbeta, \bfgamma, \bfphi) &= \pi(y_1(\bfx_0) | \bfy_1, \bfbeta_1, \sigma^2_1, \bfphi_1) \prod_{t=2}^s \pi(y_t(\bfx_0) | \bfy_t, \bfy_{t-1}, y_{t-1}(\bfx_0), \bfbeta_t, \gamma_{t-1}, \sigma^2_t, \bfphi_t), 
\end{align*}
where each predictive distribution on the right hand side is also a normal distribution. For each of these normal distributions, integrating out the location-scale parameters $\bfbeta_t, \gamma_{t-1}, \sigma^2_t$ with the prior \eqref{eqn: general prior} yields the formulas given in Lemma~\ref{lem: conditional predictive dists} according to Appendix~\ref{app: GPR}. 

\subsection{Proof of Theorem~\ref{thm: predictive mean and variance}} \label{app: predictive mean and variance}
Notice that for $t=1, \bff_1(\bfx_0):=h_1(\bfx_0)$ and $\bfX_1:=\bfH_1$; for $t=2, \ldots, s$, $\bff_t^\top(\bfx_0) = [h_t^\top(\bfx_0),$ $\hat{y}_{t-1}(\bfx_0)]$ and $\bfX_t:=[\bfH_t, y_{t-1}(\calX_t)]$. 
The formula for the cokriging predictor at level $t$ follows from the law of total expectation as follows:
\begin{align*}
     %E\{y_t(\bfx_0)|\bfy, \bfphi \} & = E\{ E[ y_t(\bfx_0) | \bfy, \bfphi, y_{t-1}(\bfx_0)] | \bfy, \bfphi \}
     E_{y_t(\bfx_0)|\bfy, \bfphi}\{y_t(\bfx_0) \} &=  E_{[y_{t-1}(\bfx_0)|\bfy, \bfphi]}\{E_{[y_t(\bfx_0)|\bfy, \bfphi, y_{t-1}(\bfx_0)]}[y_t(\bfx_0)] \} \\
    &=E_{[y_{t-1}(\bfx_0)|\bfy, \bfphi]}\{\bff_t^\top(\bfx_0)\hat{\bfb}_t+\bfr_t^\top(\bfx_0)\bfR_t^{-1}(\bfy_{t}-\bfX_t\hat{\bfb}_t) \} \\
    &= E_{[y_{t-1}(\bfx_0)|\bfy, \bfphi]}\{\bff_t^\top(\bfx_0)\}\hat{\bfb}_t+\bfr_t^\top(\bfx_0)\bfR_t^{-1}(\bfy_{t}-\bfX_t\hat{\bfb}_t)  
    = \hat{y}_t(\bfx_0).
\end{align*}
As the expectation is taken with respect to the distribution $\pi(y_{t-1}(\bfx_0)| \bfy, \bfphi)$, we have $\hat{y}_{t-1}(\bfx_0)$ $= E_{[y_{t-1}(\bfx_0)|\bfy, \bfphi]}\{y_{t-1}(\bfx_0) \}$. As only the term $\bff_t(\bfx_0)$ contains $y_{t-1}(\bfx_0)$, taking conditional expectation yields the last equation.

The formula for the cokriging variance at level $t$ follows from the law of total variance: 
\begin{align*}
   Var\{y_t(\bfx_0)|\bfy, \bfphi \} &= Var\{ E[y_t(\bfx_0)|\bfy, \bfphi, y_{t-1}(\bfx_0)] \mid \bfy, \bfphi \} \\
   &\quad + E\{ Var[y_t(\bfx_0)|\bfy, \bfphi, y_{t-1}(\bfx_0)] \mid \bfy, \bfphi \},
    %Var_{[y_t(\bfx_0)|\bfy, \bfphi]}\{y_t(\bfx_0)\} &=Var_{[y_{t-1}(\bfx_0)|\bfy, \bfphi]}\{E_{[y_t(\bfx_0)|\bfy, \bfphi, y_{t-1}(\bfx_0)]}[y_t(\bfx_0)] \} \\
    %&+ E_{[y_{t-1}(\bfx_0)|\bfy, \bfphi]}\{Var_{[y_{t-1}(\bfx_0)|\bfy, \bfphi]}[y_t(\bfx_0)]\}
\end{align*}
with 
\begin{align*}
    Var\{ E[y_t(\bfx_0)|\bfy, \bfphi, y_{t-1}(\bfx_0)] \mid \bfy, \bfphi \} &= Var\{\hat{\gamma}_{t-1} y_{t-1}(\bfx_0) | \bfy, \bfphi\} = \hat{\gamma}_{t-1}^2 \hat{v}_{t-1}(\bfx_0), \\
    E\{ Var[y_t(\bfx_0)|\bfy, \bfphi, y_{t-1}(\bfx_0)] \mid \bfy, \bfphi \} &= \frac{n_t-q_t}{n_t-q_t-2} E\{\hat{\sigma}_t^2 c_t^* | \bfy, \bfphi \},
\end{align*}
where the last equation follows from the property of Student $t$ distribution.  Applying the facts in Appendix~\ref{app: GPR} yields that $E(c_t^*|\bfy, \bfphi) = r(\bfx_0, \bfx_0|\bfphi_t) - \bfr_t^\top(\bfx_0) \bfR_t^{-1} \bfr_t(\bfx_0) + \kappa_t$.

\subsection{Proof of Lemma~\ref{lem: behavior of integrated likelihood}} \label{app: nu expression}
Let  the integrated likelihood at level $t$ be 
\begin{align*}
\begin{split}
    L^I(\bfphi_t\mid \bfy) 
    & \propto  |{\bfR}_t|^{-1/2} |{\bfX}_t^\top {\bfR}_t^{-1} {\bfX}_t|^{-1/2} \{S^2(\bfphi_t)\}^{-(({n}_t - q_t)/2+a_t-1)}. 
\end{split}
\end{align*}
\begin{itemize}[noitemsep,topsep=0pt]
    \item[(i)] The continuity of $L^I(\bfphi_t\mid \bfy)$ follows from the continuity of $r(u/\phi_{t,\ell})$ as a function of $\phi_{t, \ell}$.  Notice that $\bfR_t=\bfR_{t,1}\circ \bfR_{t,2} \circ \ldots \circ \bfR_{t,d}$, where $\bfR_{t,\ell}$ is the $n_t\times n_t$ correlation matrix for the $\ell$th input dimension with correlation parameter $\phi_{t,\ell}$ for $\ell=1, \ldots, d$. ``$\circ$'' denotes the elementwise product. As $\phi_{t,\ell} \to 0^+$, $\bfR_{t,\ell} \to \mathbf{I}$ and hence $\bfR_{t} \to 
    \bfR_{t,-\ell}:= \bfR_{t,1}\circ \ldots \circ \bfR_{t,\ell-1} \circ \mathbf{I} \circ \bfR_{t,\ell+1} \circ \ldots \circ \bfR_{t,d}$.
     Thus, as $\phi_{t,\ell}\to 0^+$, $L^I(\bfphi_t\mid \bfy) \to c_0^t$ up to an multiplicative constant with 
     \begin{align*}
      c_0^t := |{\bfR}_{t, -\ell}|^{-1/2} |{\bfX}_t^\top {\bfR}_{t,-\ell}^{-1} {\bfX}_t|^{-1/2} \{S^2(\bfphi_{t, -\ell})\}^{-(({n}_t - q_t)/2+a_t-1)},
     \end{align*}
     where $S^2(\bfphi_{t, -\ell})$ is defined as $S^2(\bfphi_t)$ with $\bfR_t$ replaced by $\bfR_{t,-\ell}$. As $c_0^t>0$, the integrated likelihood $L^I(\bfphi_t \mid \bfy)$ at level $t$ exists and is greater than zero when $\phi_{t,\ell}\to 0^+$.
     
     Notice that for any fixed $\bfphi_{t}$, we have that $L^I(\bfphi_t \mid \bfy)$ as a fixed positive constant.  So, $L^I(\bfphi \mid \bfy) = \prod_{t=1}^s L^I(\bfphi_t\mid \bfy) $ goes to a fixed and positive constant as $\phi_{t, \ell} \to 0^+$, since the quantity $L^I(\bfphi_t\mid \bfy) \to c_0^t$ up to a multiplicative constant and the quantities $L^I(\bfphi_k\mid \bfy)$ with $k=1, \ldots, t-1, t+1,\ldots s$ are fixed and positive constants. Therefore, if $\exists \ell$ such that $\phi_{t,\ell} \to 0^+$ for at least one $t$, $\lim_{\phi_{t,\ell} \to 0^+} L^I(\bfphi \mid \bfy) >0$.
     
    \item[(ii)] It follows from \cite{Gu2018} that if $\phi_{t,\ell} \to \infty$ for all $\ell$ and $t$, the integrated likelihood at level $t$ satisfies 
    \begin{align*}
        L^I(\bfphi_{t} \mid \bfy) =  
        \begin{cases}
            O\left(  \left(\sum_{\ell=1}^d \nu_{t,\ell}(\phi_{t,\ell})\right)^{a_t-1/2} \right), & \mathbf{1} \notin  \mathcal{C}(\bfX_t), \\
            O\left(  \left(\sum_{\ell=1}^d \nu_{t,\ell}(\phi_{t,\ell} )\right)^{a_t-1} \right), & \mathbf{1} \in \mathcal{C}(\bfX_t).
        \end{cases}
    \end{align*} 
    As $L^I(\bfphi \mid \bfy) = \prod_{t=1}^s L^I(\bfphi_t\mid \bfy)$, the results in Lemma~\ref{lem: behavior of integrated likelihood} follow immediately.  
\end{itemize}

\subsection{Proof of Theorem~\ref{thm: reference prior}} \label{app: reference prior}
Arranging the parameters in the order $\bfvartheta:=(\bfvartheta_1, \ldots, \bfvartheta_s)$ with $\bfvartheta_t:=(\sigma^2_t, \bfphi_t^\top)^\top$, the Fisher information matrix $I^I(\bfvartheta_1, \ldots, \bfvartheta_s)$ is computed from $\ell^I(\bfvartheta \mid \bfy)$, whose $(i,j)$ entry is 
\begin{align}
    [I^I(\bfvartheta \mid \bfy)]_{t,k} = E\left\{ \frac{\partial}{\partial \vartheta_{i}} \ell^I(\bfvartheta \mid \bfy) \times \frac{\partial}{\partial \vartheta_{j}} \ell^I(\bfvartheta \mid \bfy)  \right\}.
\end{align}
Differentiation with respect to $\sigma^2_t, \phi_{t,\ell}$ yields that 
\begin{align*}
    \begin{split}
        \frac{\partial}{\partial \sigma^2_t} \ell^I(\bfvartheta \mid \bfy) = \frac{S^2_t-E(S^2_t)}{2\sigma^4},\quad & 
        \frac{\partial}{\partial\phi_{t,\ell}} \ell^I(\bfvartheta \mid \bfy) = 
        \frac{\Sigma^{\ell}_t - E(\Sigma^{\ell}_t)}{2\sigma^2_t},
    \end{split}
\end{align*}
where $S^2_t:=\bfy_t^\top \bfQ_t\bfy_t$ with $S^2_t/\sigma^2_t \sim \chi_{n_t-q_t}^2$. $\Sigma^{\ell}_t$ is quadratic form on $\bfP_t\bfy_t\sim N(\mathbf{0}, \sigma^2_t\bfP_t\bfR_t)$ associated with the matrix $\bfR_t^{-1} \dot{\bfR}_t^{\ell} \bfR^{-1}$, where $\dot{\bfR}_t^{\ell}=\frac{\partial}{\partial \phi_{t,\ell}} \bfR_t$ is element-wise differentiation. 
Using results in \cite{Berger2001}, the $(t,t)$ block diagonal matrix in the Fisher information matrix $I^I(\bfvartheta)$ is $I^R_t(\bfphi_t)$.

\section{Nonrobust Estimation} \label{app: nonrobust estimation} 
 \subsection{Posterior in Qian \cite{Qian2008}} \label{app: Qian}
 This section gives an example to show that posterior impropriety is occurring when vague priors in \cite{Qian2008} are chosen. As an illustrating example, we only discuss the posterior for correlation parameters at the first level. According to \cite{Qian2008}, the following priors are assumed:
\begin{align*}
\pi(\bfbeta \mid \sigma^2_1) &\sim \mathcal{N}(\bfu_1, v_1\mathbf{I} \sigma^2_1), \\
\pi(\sigma^2_1) &\sim \mathcal{IG}(\alpha_1, \gamma_1^0), \\
\pi(\phi_{1,\ell}) &\sim \text{Gamma}(a_1^0, b_1^0), \ell=1, \ldots, d,
\end{align*}
where $\bfu_1, v_1, \alpha_1, \gamma_1^0, a_1^0, b_1^0$ are hyperparameters. 
With similar notations in \cite{Qian2008}, the posterior distribution of $\bfphi_1$ is,  
 $$ \pi(\bfphi_1 \mid \bfy_1) \propto \pi(\bfphi_1) |\bfR_1|^{-1/2} |\bfA_1|^{-1/2} |^{-1/2} \left\{ \gamma_1^0 + \frac{4c_1 - \bfB^\top_1\bfA_1^{-1} \bfB_1}{8}  \right \}^{-(\alpha_1+n/2)} ,$$
 where $\bfA_1 = v_1^{-1} \mathbf{I} + \bfH_1^\top \bfR_1^{-1} \bfH_1$, $\bfB_1=-2v_1^{-1} \bfu_1 - 2\bfH_1^\top \bfR_1^{-1} \bfy_1$, $c_1= v_1^{-1} (\bfu^\top_1 \bfu_1) + \bfy_1^\top \bfR_1^{-1} \bfy_1$. 

Thus, when $v_1 \to \infty, \alpha_1\to 0, \gamma_1^0\to 0$ such that priors for $\bfbeta_1$ and $\sigma^2_1$ become vague, the marginal posterior of $\bfphi_1$ will be proportional to the product of the prior $\prod_{\ell=1}^d \mathcal{IG}(\bfphi_{1, \ell} \mid a_1^0, b_1^0)$ and the marginal likelihood $L(\bfphi_1\mid \bfy)=|\bfR_1|^{-1/2} |\bfH_1^\top \bfR_1^{-1} \bfH_1|^{-1/2} (S^2)^{-n/2}$, where $S^2=\bfy_1^\top\bfQ_1 \bfy_1$ and $\bfQ_1=\bfR_1^{-1} - \bfR_1^{-1}\bfH_1(\bfH_1^\top \bfR_1^{-1}$ $\bfH_1)^{-1}\bfH_1^\top \bfR_1^{-1}$. This posterior concentrates all its mass near 0 as $a_1^0 \to 0$ and $b_1^0\to 0$, resulting in nonrobust estimation according to Lemma 3.3 in \cite{Gu2018}. 
 
 \subsection{Marginal likelihood in Gratiet \cite{Gratiet2013}} \label{app: Gratiet}
 This section shows that the concentrated restricted likelihood can be maximized either at zero or infinity when noninformative priors or informative priors (when they are chosen to be vague) in \cite{Gratiet2013} are used. Gratiet \cite{Gratiet2013} considers two different types of priors for $\bfbeta, \bfgamma, \bfsigma^2$: \emph{noninformative priors} and \emph{informative priors}. The noninformative priors are chosen to be
\begin{align*}
\pi(\bfbeta_1 \mid \sigma^2_1, \bfphi_1) \propto 1, & & \pi(\sigma^2_1) \propto 1/\sigma^2_1, \\
\pi(\bfbeta_t, \bfgamma_{t-1} \mid \sigma^2_t, \bfphi_t) \propto 1, & &\pi(\sigma^2_t) \propto 1/\sigma^2_t, t=2, \ldots, s. 
\end{align*} and 
the informative priors in \cite{Gratiet2013} are chosen to be
\begin{align*}
\pi(\bfbeta_1 \mid \sigma^2_1, \bfphi_1) \sim \mathcal{N}(\bfb_1^0, \sigma^2_1 \bfV_1^0), & & \pi(\sigma^2_1 \mid \bfphi_1) \sim \mathcal{IG}(\alpha_1^0, \gamma_1^0), \\
\pi((\bfbeta_t, \gamma_{t-1}) \mid \sigma_t^2, \bfphi_t) \sim \mathcal{N}(\bfb_t^0, \sigma^2_t \bfV_t^0), & & \pi(\sigma^2_t \mid \bfphi_t) \sim \mathcal{IG}(\alpha_t^0, \gamma_t^0).
\end{align*}
Without further assuming a prior for $\bfphi$, Gratiet \cite{Gratiet2013} proposes to maximize the following concentrated restricted likelihood:
\begin{align*}
L_1(\bfphi_1 \mid \bfy, \hat{\sigma}^2_1) &\propto |\bfR_1|^{-1/2} (\hat{\sigma}^2_1)^{-(n_1-p_1)/2}, \\
L_t(\bfphi_t \mid \bfy, \hat{\sigma}^2_t) &\propto |\bfR_t|^{-1/2} (\hat{\sigma}^2_t)^{-(n_1-p_1-1)/2}, t=1, \ldots, s.
\end{align*}
For noninformative priors, $\hat{\sigma}^2_t \propto S^2(\bfphi_t)$. According to Lemma 3.3 in \cite{Gratiet2013}, these marginal likelihood functions can have modes at $\bfR = \mathbf{I}_n$ and $\bfR = \mathbf{1}_n \mathbf{1}_n^\top$, resulting in nonrobust estimates for $\bfphi$. For informative priors, the expression for $\hat{\sigma}^2_t$ is of the following form:
\begin{align*}
\hat{\sigma}^2_t \propto \gamma_t^0 + (\bfb_t - \bar{\bfb}_t)^\top \{ \bfV_t + (\bfX_t^\top \bfR^{-1} \bfX_t)^{-1} \}^{-1} (\bfb_t - \bar{\bfb}_t) + S^2(\bfphi_t),
\end{align*}
where $\bfb_t :=(\bfbeta_t, \gamma_{t-1})^\top$ with $\gamma_0:=0$. $\bar{\bfb}_t$ is the generalized least square estimate for $\bfb_t$. 
When $\gamma_t^0 \to 0$ and $\bfV_t^{-1} \to \mathbf{0}$, we have $\hat{\sigma}^2_t \propto S^2(\bfphi_t)$. This reduces to the case when noninformative priors are used. Thus, estimates of the parameters $\bfphi_t$ can be nonrobust. It is worth noting that \cite{Gratiet2013} chooses this proper prior to be informative instead of vague. Thus, it is crucial to perform sensitivity analysis whenever this prior is chosen to be informative.  

\begin{comment}
the marginal likelihood after integrating out $\bfbeta, \bfgamma, \bfsigma$ becomes 
\begin{align*}
    L(\bfphi\mid \bfy) = \prod_{t=1}^s L(\bfphi_t\mid \bfy) 
     \propto  \prod_{t=1}^s |{\bfR}_t|^{-1/2} |{\bfX}_t^\top {\bfR}_t^{-1} {\bfX}_t|^{-1/2} \{S^2(\bfphi_t)\}^{-(({n}_t - q_t)/2)}. 
\end{align*}
According to Lemma 3.3 in \cite{Gratiet2013} and Lemma~\ref{lem: behavior of integrated likelihood} in this article, the marginal likelihood at each level can have modes at $\bfR = \mathbf{I}_n$ and $\bfR = \mathbf{I}_n \mathbf{I}_n^\top$, resulting in nonrobust estimates for $\bfphi$. 
\end{comment}

\section{Parameter Estimation for $\{\bfbeta, \bfgamma, \bfsigma^2 \}$} \label{app: parameter estimation}
Let $\bfb_1=\bfbeta_1$, $\bfb_t=(\bfbeta^\top_t, \gamma_{t-1})^\top$ for $t>1$, and $\bfb=(\bfb_1^\top, \ldots, \bfb_s)^\top$. The posterior distribution of $\bfb$ given $\bfy$ and $\hat{\bfphi}$ with objective priors $\pi(\bfb, \bfsigma^2) \propto \prod_{t=1}^s \sigma^{-2}_{t}$ is 
\begin{align*}
    \pi(\bfb \mid \bfy, \hat{\bfphi}) & \propto \int \sigma^{-2}_1 \pi(\bfy_1\mid \bfb_1, \sigma^2_1, \hat{\bfphi}_1) \prod_{t=2}^s \pi(\bfy_t\mid \bfy_{t-1}, \bfb_t, \sigma^2_t, \hat{\bfphi}_t) \sigma^{-2}_t \, d(\prod_{t=1}^s\sigma^2_t) \\
    & \propto \prod_{t}^s \{(\bfy_t - \bfX_t\bfb_t)^\top \bfR_t^{-1}(\bfy_t - \bfX_t\bfb_t)\}^{-n_t/2} |\bfR_t|^{-1/2}. 
\end{align*}
Maximization with respect to this posterior distribution yields that 
$$\hat{\bfb}_t = (\bfX_t^\top \bfR_t^{-1} \bfX_t)^{-1} \bfX_t^\top \bfR_t^{-1} \bfy_t.$$

Similarly, the posterior distribution of $\bfsigma^2$ given $\bfy$ and $\hat{\bfphi}$ with objective priors $\pi(\bfb, \bfsigma^2) \propto \prod_{t=1}^s \sigma^{-2}_{t}$ is 
\begin{align*}
    \pi(\bfsigma^2 \mid \bfy, \hat{\bfphi}) & \propto \int \sigma^{-2}_1 \pi(\bfy_1\mid \bfb_1, \sigma^2_1, \hat{\bfphi}_1) \prod_{t=2}^s \pi(\bfy_t\mid \bfy_{t-1}, \bfb_t, \sigma^2_t, \hat{\bfphi}_t) \sigma^{-2}_t \, d(\prod_{t=1}^s\bfb_t) \\
    & \propto \prod_{t=1}^s (\sigma_t^2)^{-(n_t-q_t)/2-1} |\bfR_t|^{-1/2} |\bfX_t^\top \bfR_t^{-1} \bfX_t|^{-1/2} \exp\{-S^2(\hat{\bfphi}_t)\}. 
\end{align*}
 It is easy to recognize that $\pi(\sigma_t^2\mid \bfy_{t-1}, \bfy_t, \bfphi_t) = \mathcal{IG}((n_t-q_t)/2, S^2(\hat{\bfphi}_t)/2)$. Hence, maximizing this posterior distribution with respect to $\sigma^2_t$ yields that 
 \begin{align*}
     \hat{\sigma}^2_t = S^2(\hat{\bfphi}_t) / (n_t-q_t+2).
 \end{align*}

%\appendixfour
\section{Input Variables in the Borehole Function} \label{app: testing function}
The input variables and their ranges in the Borehole function are given in Table~\ref{table: input in borehole}. 
\begin{table}[htbp]
\centering
\normalsize
   \caption{Input variables and their ranges in the Borehole function.}
  {\resizebox{1.0\textwidth}{!}{%
  \setlength{\tabcolsep}{3.0em}
   \begin{tabular}{l l} 
   \toprule 
input  & physical meaning \\
\midrule
$r_w\in [0.05, 0.15]$ & radius of borehole (m) \\ \noalign{\vskip 1.5pt}
$r\in[100, 50000]$  & radius of influence (m) \\ \noalign{\vskip 1.5pt}
$T_u\in [63070, 115600]$ & transmissivity of upper aquifer ($m^2$/yr)\\ \noalign{\vskip 1.5pt}
$H_u\in[990, 1110]$ & potentiometric head of upper aquifer (m) \\ \noalign{\vskip 1.5pt}
$T_{\ell} \in [63.1, 116]$ & transmissivity of lower aquifer ($m^2$/yr) \\ \noalign{\vskip 1.5pt}
$H_{\ell} \in [700, 820]$ & potentiometric head of lower aquifer (m) \\ \noalign{\vskip 1.5pt}
$L\in [1120, 1680]$ & length of borehold (m) \\ \noalign{\vskip 1.5pt}
$K_w\in[9855, 12045]$ & hygraulic conductivity of borehole (m/yr) \\
 \bottomrule
   \end{tabular}%
   }}
   \label{table: input in borehole}
\end{table}  

\section{Numerical Illustrations with the Mat\'ern-5/2 Correlation} \label{app: Result with Matern}
It is worth noting that when the Mat\'ern-5/2 correlation is used, the assumption in part (ii) of Assumption~\ref{ass: correlation} could be problematic due to the singularity of matrix $\mathbf{D}_{t, \ell}$. Thus the formulation of objective priors might be wrong. However, we still report the results using the previous formulation of objective priors in this situation. 

Table~\ref{table: borehole example} shows the results for the testbed with the Borehole function in Section~\ref{sec: borehole}. The conclusions under the Mat\'ern-5/2 correlation are very similar to those under the power-exponential correlation with roughness parameter 1.9. The only difference is that the empirical coverage probabilities are less than the nominal level 0.95 for objective priors under the power-exponential correlation function. This result makes sense since the power-exponential correlation with roughness parameter 1.9 yields process realizations that are not differentiable and hence the corresponding prediction will have larger uncertainty than those under process realizations that are twice differentiable due to the Mat\'ern-5/2 correlation.
\begin{table}[htbp]
\centering
\normalsize
   \caption{Predictive performance at 20 held-out inputs in the autoregressive cokriging model using the proposed objective priors and using the plug-in MLE approach in Gratiet \cite{Gratiet2013} when the Mat\'ern-5/2 correlation is used.}
     {\resizebox{1.0\textwidth}{!}{%
  \setlength{\tabcolsep}{2.0em}
   \begin{tabular}{l c c c } 
   \toprule 
   \noalign{\vskip 1.5pt}
& RMSE & CVG(95\%)   & ALCI(95\%)   \\  \noalign{\vskip 1.5pt}
\noalign{\vskip 1.5pt} \hline \noalign{\vskip 3pt}   \noalign{\vskip 1.5pt}
				 {Independent reference prior} &0.463      & 0.85 & 1.353  \\ 
				 \noalign{\vskip 4pt}				    
				 {Independent Jeffreys prior} &0.466   & 0.85   & 1.359  \\
				 \noalign{\vskip 4pt}  
		       {Jointly robust prior} & 0.379   & 0.95    &  1.436  \\
		       \noalign{\vskip 4pt} 
		       {plug-in MLE} &  1.940   & 1.00    & 17.85  \\
\noalign{\vskip 1.5pt} \bottomrule
   \end{tabular}%
   }}
   \label{table: borehole example}
\end{table}

Table~\ref{table: two-level example} and Table~\ref{table: three-level example} show the results for the 2-level cokriging model  and the 3-level cokriging model for the fluidized-bed processes in Section~\ref{sec: FBed}. The conclusions under the Mat\'ern-5/2 correlation are the same as those under the power-exponential correlation with roughness parameter 1.9.

\begin{table}[htbp]
\centering
\normalsize
   \caption{Predictive performance under 2-level cokriging based on $T_{2}$ and $T_{exp}$ using the proposed objective priors and the plug-in MLE approach in \cite{Gratiet2013} when the Mat\'ern-5/2 correlation is used.}
     {\resizebox{1.0\textwidth}{!}{%
  \setlength{\tabcolsep}{2.0em}
   \begin{tabular}{l c c c } 
   \toprule 
   \noalign{\vskip 1.5pt}
& RMSE & CVG(95\%)   & ALCI(95\%)   \\  \noalign{\vskip 1.5pt}
\noalign{\vskip 1.5pt} \hline \noalign{\vskip 3pt}   \noalign{\vskip 1.5pt}
{Independent reference prior} &0.513      & 0.88 & 2.357  \\ 
				 \noalign{\vskip 4pt}				    
{Independent Jeffreys prior} &0.563   & 0.88   & 1.944  \\
				 \noalign{\vskip 4pt}  
{Jointly robust prior} & 0.524   & 0.88    &  2.214  \\
		       \noalign{\vskip 4pt} 
 {plug-in MLE} &  2.219  & 0.88    & 7.315  \\
\noalign{\vskip 1.5pt} \bottomrule
   \end{tabular}%
   }}
   \label{table: two-level example}
\end{table}

\begin{table}[htbp]
\centering
\normalsize
   \caption{Predictive performance under 3-level cokriging based on $T_{1}$, $T_3$, and $T_{exp}$ using the proposed objective priors and the plug-in MLE approach in \cite{Gratiet2013} when the Mat\'ern-5/2 correlation is used.}
     {\resizebox{1.0\textwidth}{!}{%
  \setlength{\tabcolsep}{2.0em}
   \begin{tabular}{l c c c } 
   \toprule 
   \noalign{\vskip 1.5pt}
& RMSE & CVG(95\%)   & ALCI(95\%)   \\  \noalign{\vskip 1.5pt}
\noalign{\vskip 1.5pt} \hline \noalign{\vskip 3pt}   \noalign{\vskip 1.5pt}
{Independent reference prior} &1.309      & 1.00 & 6.591  \\ 
				 \noalign{\vskip 4pt}				    
{Independent Jeffreys prior} &1.222   & 1.00   & 6.611  \\
				 \noalign{\vskip 4pt}  
{Jointly robust prior} & 0.831  & 1.00    &  5.892  \\
		       \noalign{\vskip 4pt} 
{plug-in MLE} &  8.439  & 0.78    & 13.42  \\
\noalign{\vskip 1.5pt} \bottomrule
   \end{tabular}%
   }}
   \label{table: three-level example}
\end{table}

All these numerical results suggest that Bayesian analysis with the objective priors and the jointly robust priors are preferred than the plug-in MLE approach in \cite{Gratiet2013}. In addition, the reference prior should also be preferred over the Jeffreys prior in such Bayesian analysis. It is worth mentioning that different covariance models could have noticeable impact on the predictive performance. When the covariance model is assumed with more confidence (such as the Mat\'ern-5/2 correlation versus the power-exponential with roughness parameter 1.9), in general the predictive uncertainty tends to be smaller. However, obtaining a good estimator for the smoothness parameter that controls the differentiability of Gaussian process realizations remains an unresolved issue.

\section*{Acknowledgments}
The author is grateful to Professor James O. Berger for his insightful comments and suggestions. The author would like to thank an associate editor and two anonymous referees for their comments that significantly improve the presentation of the paper. 
\bibliographystyle{siamplain}
\bibliography{references}

\end{document}